\documentclass{article}
\usepackage[latin1]{inputenc}
\usepackage{amsmath, amsthm, amsfonts, mathrsfs, MnSymbol}

\newtheorem{Thm}{Theorem}
\newtheorem{Lemma}[Thm]{Lemma}

\newtheorem{Cor}[Thm]{Corollary}
\newtheorem*{Ex}{Example}

\theoremstyle{definition}
\newtheorem{Def}[Thm]{Definition}
\theoremstyle{remark}
\newtheorem*{Remark}{Remark}

\title{Group gradings on the Jordan algebra of upper triangular matrices}
\author{Plamen Emilov Koshlukov\thanks{Partially supported by FAPESP grant No. 2014/09310-5, and by CNPq grant No. 304632/2015-5}\and Felipe Yukihide Yasumura\thanks{Supported by PhD grant 2013/22802-1 from FAPESP}\\
Department of Mathematics, State University of Campinas\\
651 Sergio Buarque de Holanda\\
13083-859 Campinas, SP, Brazil\\
e-mail addresses: \texttt{plamen,  	ra091138@ime.unicamp.br}}
\date{}

\begin{document}
\maketitle
\begin{abstract}
Let $G$ be an arbitrary group and let $K$ be a field of characteristic different from 2. We classify the $G$-gradings on the Jordan algebra $\text{UJ}_n$ of upper triangular matrices of order $n$ over $K$. It turns out that there are, up to a graded isomorphism, two families of gradings: the elementary gradings (analogous to the ones in the associative case), and the so called mirror type (MT) gradings. Moreover we prove that the $G$-gradings on $\text{UJ}_n$ are uniquely determined, up to a graded isomorphism, by the graded identities they satisfy. 
\end{abstract}

\noindent Keywords: Jordan algebras, Group gradings, Upper triangular matrices.

\noindent MSC: 17C05; 17C99; 16W50; 17C50, 16R99

\section*{Introduction}
Graded algebras were first introduced and studied in Commutative algebra in order to generalize properties of polynomial rings. The study of the gradings on associative algebras was initiated by Wall \cite{wall}. He described the finite dimensional graded simple algebras with grading group $\mathbb{Z}_2$, the cyclic group of order 2. Much later, around 1985, Kemer developed the structure theory of the T-ideals (ideals of identities) in the free associative algebra, see for instance \cite{kemerbook}. One of the principal ingredients of that theory is the study of $\mathbb{Z}_2$-graded algebras and their graded identities. The theory developed by Kemer has since immensely influenced the research in PI theory, and motivated further study of gradings and on graded polynomial identities in associative algebras. Motivated in part by Kemer's results, gradings on algebras became an object of extensive study. We cite here the recent monograph \cite{ElKo2013} and the bibliography therein for further and more detailed information concerning gradings on algebras. We recall some of the cornerstone results in the area that will be used in the exposition below. In order to do it in a precise way we have to introduce now part of the notions we will need. 

Let $G$ be a group, $K$ a field, and $A$ an algebra (that is a vector space over $K$ equipped with a bilinear multiplication). We do not require the multiplication in $A$ to be either commutative or associative. The algebra $A$ is $G$-graded if $A=\oplus_{g\in G}A_g$ where the subspaces $A_g\subseteq A$ satisfy $A_gA_h\subseteq A_{gh}$,                                                                     $g$, $h\in G$. A vector subspace $V$ of $A$ is homogeneous, or graded, if $V=\oplus_{g\in G} V\cap A_g$. In the same way one defines graded subalgebras and ideals of $A$. 

The gradings on the (associative) matrix algebras of order $n$ were completely described by Bahturin and Zaicev, see for example \cite{bahtzai03}. Later on this description was extended, in the case of abelian groups, to simple associative algebras with a minimal one sided ideal (over an algebraically closed field), see \cite[pp. 27, 28]{ElKo2013}. Similar results were obtained for simple Lie algebras, see for example \cite{ElKo2013};  simple Jordan algebras, see \cite{bashza} and its bibliography.

We note that relatively little is known about the classification of the gradings on important algebras that are not simple. The Grassmann (or exterior) algebra appears naturally in various branches of Mathematics and Physics but the gradings on this algebra are known under restrictions on the gradings \cite{omdvvs}. The gradings on the Jordan algebra of the $2\times 2$ upper triangular matrices were described in \cite{pkfm}. Note that the latter algebra is a Jordan algebra of a symmetric bilinear form though the form is degenerate.

The elementary gradings on the associative algebra $UT_n(K)$ of the upper triangular matrices of order $n$ over a field $K$ were described in \cite{VinKoVa2004}. Also, in \cite{VaZa2007}, the authors proved that every grading on $UT_n$ is, up to a graded isomorphism, elementary. Hence one has a complete classification of the gradings on the associative algebra of upper triangular matrices.

In this paper we study the gradings on the Jordan algebra $\text{UJ}_n$ of the upper triangular matrices of order $n$ over an infinite field. We describe completely all these gradings. Our principal result states that every group grading on $\text{UJ}_n$ is either elementary or is of the so-called mirror type (MT for short).

For the particular case $n=2$, we recall the description of the gradings on $\text{UJ}_2$ given in \cite{pkfm}. 
\begin{Thm}[\cite{pkfm}]
	Denote $1=e_{11}+e_{22}$, $a=e_{11}-e_{22}$, $b=e_{12}\in\text{UJ}_2$. Let $\text{UJ}_2=J_0+J_1$ be $\mathbb{Z}_2$-graded. Then the grading is isomorphic to one of the following gradings: 
	\begin{enumerate}
		\setcounter{enumi}{-1}
		\item The trivial grading: $J_0=\text{UJ}_2$;
		\item The associative grading: $J_0=1K\oplus bK$, $J_1=aK$;
		\item The scalar grading: $J_0=1K$, $J_1=aK\oplus bK$;
		\item The classical grading: $J_0=1K\oplus aK$, $J_1=bK$.
	\end{enumerate}
\end{Thm}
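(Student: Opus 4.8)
The plan is to determine the homogeneous components directly, bootstrapping from a single structural fact about $\text{UJ}_2$. First I would record the multiplication: with the Jordan product $x\circ y=\tfrac12(xy+yx)$ and in the basis $\{1,a,b\}$ one has that $1$ is the unit, $a\circ a=1$, $a\circ b=0$, $b\circ b=0$. Since $\text{UJ}_2$ is unital, in any group grading the unit lies in the identity component, so $1\in J_0$. Next I would introduce the trace $T(\alpha\,1+\beta a+\gamma b)=2\alpha$ (that is, $T(x)=x_{11}+x_{22}$) and the norm $N(\alpha\,1+\beta a+\gamma b)=\alpha^2-\beta^2$; a direct check gives the quadratic relation $x\circ x=T(x)\,x-N(x)\,1$ valid for every $x\in\text{UJ}_2$.

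The key step is the following. If $w\in J_1$, then $w\circ w\in J_1\circ J_1\subseteq J_0$ and $N(w)\,1\in 1K\subseteq J_0$, whence $T(w)\,w=w\circ w+N(w)\,1\in J_0$; since $J_0\cap J_1=0$ this forces $T(w)=0$ unless $w=0$. Hence $J_1\subseteq\ker T=aK\oplus bK$, so $\dim J_1\le 2$. If $\dim J_1=0$ the grading is trivial (case 0). If $\dim J_1=2$, then $\dim J_0=1$ forces $J_0=1K$ and $J_1=\ker T=aK\oplus bK$, which is the scalar grading (case 2).

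It remains to handle $\dim J_1=1$. Here I would write $J_1=wK$ with $0\ne w=\beta a+\gamma b$, and, since $1\in J_0$ and $\dim J_0=2$, choose (after subtracting a scalar multiple of $1$) an element $v=\beta' a+\gamma' b\ne 0$ of $J_0$ with $J_0=1K\oplus vK$ and $\{v,w\}$ linearly independent. The remaining condition to exploit is $J_0\circ J_1\subseteq J_1$, i.e. $v\circ w\in wK$; computing $v\circ w=\beta\beta'\,1$ (the other products vanish because $a\circ b=b\circ b=0$) and noting that $wK$ contains no nonzero multiple of $1$, one gets $\beta\beta'=0$, and not both $\beta,\beta'$ vanish by linear independence. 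If $\beta=0$, then independence gives $\gamma\ne 0$ and $\beta'\ne 0$, so up to rescaling $J_1=bK$ and $J_0=1K\oplus(a+\lambda b)K$ for some $\lambda\in K$; the map fixing $1$ and $b$ with $a\mapsto a-\lambda b$ is an automorphism of $\text{UJ}_2$ sending this grading to the classical grading $J_0=1K\oplus aK$, $J_1=bK$ (case 3). If instead $\beta'=0$ (so $\beta\ne 0$), the symmetric computation yields $J_0=1K\oplus bK$ and $J_1=(a+\mu b)K$, which the analogous automorphism $a\mapsto a-\mu b$ sends to the associative grading $J_0=1K\oplus bK$, $J_1=aK$ (case 1). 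This covers every possibility.

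I do not expect a genuine obstacle: the argument is short once the quadratic relation is in hand, since it immediately confines $J_1$ to the two-dimensional subspace $\ker T$. The only points requiring a little care are verifying that $a\mapsto a-\lambda b$ (with $1$, $b$ fixed) respects the three defining products and hence is an automorphism, and the elementary bookkeeping identifying each grading produced with one of the four listed representatives.
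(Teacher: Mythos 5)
Your argument is correct, and it is a genuinely different route from the one taken in the paper: there the theorem is quoted from the Koshlukov--Martino paper and is recovered only a posteriori, as the case $n=2$ of the general classification (elementary gradings $(\text{UJ}_n,\eta)$ and MT gradings $(\text{UJ}_n,t,\eta)$), whose proof goes through homogeneous ideals such as $J=(\text{UJ}_n,\text{UJ}_n,\text{UJ}_n)$, annihilators, diagonalizable homogeneous idempotents and induction on $n$. You instead exploit the special feature of $n=2$ that $\text{UJ}_2$ is a (degenerate) Jordan algebra of a bilinear form: the Cayley--Hamilton relation $x\circ x=T(x)x-N(x)1$ together with homogeneity of the unit forces $J_1\subseteq\ker T=aK\oplus bK$, and the rest is a short case analysis on $\dim J_1$ finished off by the unipotent automorphism fixing $1,b$ and sending $a\mapsto a-\lambda b$ (which is indeed conjugation by $\left(\begin{smallmatrix}1&\lambda/2\\0&1\end{smallmatrix}\right)$, so consistent with the Beidar--Bre\v{s}ar--Chebotar description of automorphisms used later in the paper). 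What your approach buys is a self-contained, elementary proof, valid for any $\mathbb{Z}_2$-grading without invoking the general machinery; what it does not give is the general $n$ or general $G$ statement, where the trace/norm trick is unavailable and the paper's structural argument is needed. Two small cosmetic points: you use $x\circ y=\tfrac12(xy+yx)$ whereas the paper's convention is $x\circ y=xy+yx$ (harmless in characteristic $\ne 2$, only scalars change and the listed components are unaffected), and the homogeneity of the unit in a group-graded unital (not necessarily associative) algebra, which you assert, deserves the one-line standard verification.
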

The above theorem will be a particular case of our results. We recall that in \cite{pkfm} the authors also described the graded polynomial identities satisfied by each of the possible gradings, including the trivial one. Here we are not going to do that. It seems to us that the description of the graded identities satisfied by $\text{UJ}_n$ is a very complicated problem, and even in the simplest cases it turned out to be far from our reach.

\section{Notations}
Here, we will denote $\text{UJ}_n=UT_n^{(+)}$ the vector space of the upper triangular matrices of size $n$ equipped with the Jordan product $a\circ b=ab+ba$.
We denote the associator by $(a,b,c)=(a\circ b)\circ c-a\circ(b\circ c)$.
We shall omit the parentheses in left normed products, that is $a\circ b\circ c$ stands for $(a\circ b)\circ c$, and similarly for products of more than 3 elements.

We denote by $e_{ij}$ the matrix units having entry 1 at position $(i,j)$, and 0 elsewhere. Given a matrix $x\in\text{UJ}_n$ we denote by $(x)_{(i,j)}$ the $(i,j)$-th entry of $x$. Also for $i$, $m\in\mathbb{N}$ we define 
\begin{eqnarray*}
	e_{i:m}=e_{i,i+m},&& e_{-i:m}=e_{n-i-m+1,n-i+1},\\
	(x)_{(i:m)}=(x)_{(i,i+m)},&& (x)_{(-i:m)}=(x)_{(n-i-m+1,n-i+1)}.
\end{eqnarray*}

Let $G$ be a group with neutral element $1$. Unless otherwise stated we use the multiplicative notation, even if the group is abelian. Of course we write additively the groups $(\mathbb{Z},+)$, and $(\mathbb{Z}_n,+)$.

First we classify the elementary and MT gradings (to be defined below), and then prove that every grading on $\text{UJ}_n$ is, up to a graded isomorphism, either elementary or MT.

\section{Elementary Gradings}

Let $G$ be any group. We call a $G$-grading on $\text{UJ}_n$ elementary if all matrix units $e_{ij}$ are homogeneous in the grading.
\begin{Lemma}\label{basic_element}
Let $\text{UJ}_n$ be equipped with an elementary $G$-grading. Then
		\begin{enumerate}
			\renewcommand{\labelenumi}{(\roman{enumi})}
			\item $\deg e_{ii}=1$, $i=1$, \dots, $n$.
			\item The sequence $\eta=(\deg e_{12}, \deg e_{23},\ldots, \deg e_{n-1,n})$  defines completely the grading.
			\item The support of the grading is commutative.
		\end{enumerate}
	\end{Lemma}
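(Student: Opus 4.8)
The plan is to prove the three items in turn, using only the behaviour of the Jordan product $\circ$ on the matrix units, all of which lie in homogeneous components by the definition of an elementary grading.

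\emph{Item (i).} I would start from the fact that each diagonal matrix unit is a scalar multiple of an idempotent: $e_{ii}\circ e_{ii}=2e_{ii}$, and this is a \emph{nonzero} homogeneous element since $\operatorname{char}K\neq 2$. Writing $g=\deg e_{ii}$, we get $2e_{ii}\in A_{g^2}$ as well as $2e_{ii}\in A_g$, whence $g^2=g$ and therefore $g=1$.

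\emph{Item (ii).} The crucial identity is that for indices $i<k<j$ one has
\[
e_{ik}\circ e_{kj}=e_{ik}e_{kj}+e_{kj}e_{ik}=e_{ij},
\]
because $e_{kj}e_{ik}=0$ (we use $i\neq j$). Comparing degrees, $\deg e_{ij}=\deg e_{ik}\cdot\deg e_{kj}$; choosing $k=i+1$ and iterating gives $\deg e_{ij}=\deg e_{i,i+1}\deg e_{i+1,i+2}\cdots\deg e_{j-1,j}$ for all $i<j$. Since the $e_{ij}$ ($1\le i\le j\le n$) form a $K$-basis of $\text{UJ}_n$ consisting of homogeneous elements, each component $A_g$ equals the span of those $e_{ij}$ with $\deg e_{ij}=g$; combined with (i) and the formula just obtained, this means $\eta$ determines the grading completely.

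\emph{Item (iii).} Set $g_k=\deg e_{k,k+1}$ for $1\le k\le n-1$. By (i) and (ii) every element of the support is either $1$ or of the form $g_ig_{i+1}\cdots g_{j-1}$, so it suffices to show that the $g_k$ commute pairwise. Here I would bring in the commutativity of $\circ$: for $1\le i<j\le n-1$,
\[
e_{i,j+1}=e_{i,i+1}\circ e_{i+1,j+1}=e_{i+1,j+1}\circ e_{i,i+1},
\]
so $g_i$ commutes with $g_{i+1}g_{i+2}\cdots g_j$. Then I would induct on $j-i$: the case $j-i=1$ reads $g_ig_{i+1}=g_{i+1}g_i$; for $j-i>1$, by the inductive hypothesis $g_i$ commutes with $u:=g_{i+1}\cdots g_{j-1}$, and since it also commutes with $ug_j$, we get $ug_ig_j=g_iug_j=ug_jg_i$, and cancelling $u$ on the left gives $g_ig_j=g_jg_i$. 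Hence all $g_k$ commute, and therefore the support is commutative.

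Items (i) and (ii) are routine; the only place where some care is needed is the inductive bootstrapping in (iii), passing from the relations ``$g_i$ commutes with $g_{i+1}\cdots g_j$'' to honest pairwise commutativity of the $g_k$. This is also the unique point where commutativity of the Jordan product --- rather than mere associativity of the grading group --- is used.
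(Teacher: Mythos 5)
Your proof is correct and follows essentially the same route as the paper: items (i) and (ii) are identical to the paper's argument, and item (iii) rests on the same two ingredients, namely commutativity of $\circ$ applied to products of consecutive matrix units together with cancellation in the group $G$. The only cosmetic difference is in (iii): the paper obtains $t_it_j=t_jt_i$ in a single step, by comparing the degrees of the equal nonzero elements $e_{i,i+1}\circ\bigl(e_{j,j+1}\circ(e_{i+1,i+2}\circ\cdots\circ e_{j-1,j})\bigr)$ and $e_{j,j+1}\circ(e_{i,i+1}\circ e_{i+1,i+2}\circ\cdots\circ e_{j-1,j})$ and cancelling $t_{i+1}\cdots t_{j-1}$ on the right, thereby avoiding the inductive bootstrapping you perform.
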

	\begin{proof}
The statements of the lemma and their proofs are standard facts, we give these proofs for the sake of completeness.

(i) Since $e_{ii}\circ e_{ii}=2e_{ii}$ we have $(\deg e_{ii})^2=\deg e_{ii}$ hence $\deg e_{ii}=1$.

(ii) It follows from $	e_{ij}=e_{i,i+1}\circ e_{i+1,i+2}\circ\cdots\circ e_{j-1,j}$.

(iii) Let $t_1=\deg e_{12}$, $t_2= \deg e_{23}$, \dots, $t_{n-1}= \deg e_{n-1,n}$. By (ii), it suffices to prove that $t_it_j=t_jt_i$ for all $i$, $j\in\{1,2,\ldots,n-1\}$. But if $i<j$ then
\[
				e_{i,i+1}\circ (e_{j,j+1}\circ(e_{i+1,i+2}\circ\cdots\circ e_{j-1,j}))=e_{j,j+1}\circ(e_{i,i+1}\circ e_{i+1,i+2}\circ\cdots\circ e_{j-1,j}).
\]
Thus $t_it_jt_{i+1}\cdots t_{j-1}=t_jt_it_{i+1}\cdots t_{j-1}$ and $t_it_j=t_jt_i$.
	\end{proof}

	From here on in this section, we assume that $G$ is abelian.
	
	\noindent\textbf{Notation.} We denote by $(\text{UJ}_n,\eta)$ the elementary grading defined by $\eta\in G^{n-1}$. This grading is defined by putting $\deg e_{i,i+1}=g_i$, for each $i$, where $\eta=(g_1,g_2,\ldots,g_{n-1})$. We denote by $\text{rev}\,\eta=(g_{n-1},g_{n-2},\ldots,g_1)$.

	\begin{Lemma}
		Let $\eta\in G^{n-1}$. The map $\varphi\colon (\text{UJ}_n,\eta)\to (\text{UJ}_n,\text{rev}\,\eta)$ given by $e_{ij}\mapsto e_{n-j+1,n-i+1}$ is an isomorphism of $G$-graded algebras.
	\end{Lemma}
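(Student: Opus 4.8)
The plan is to recognize $\varphi$ as the reflection of a matrix across its anti-diagonal; this is an anti-automorphism of the associative algebra $UT_n$ and therefore an automorphism of the Jordan algebra $\text{UJ}_n$, after which it only remains to track degrees.

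First I would introduce the exchange matrix $J=\sum_{m=1}^{n}e_{m,n-m+1}$, which satisfies $J^{2}=e_{11}+\cdots+e_{nn}$, and verify by a short matrix-unit computation that $Je_{ji}J=e_{n-j+1,n-i+1}$. Since $e_{ij}^{T}=e_{ji}$, this exhibits $\varphi$ as $\varphi(x)=Jx^{T}J$ for every $x\in\text{UJ}_n$. Because $i\le j$ forces $n-j+1\le n-i+1$, the map $\varphi$ carries upper triangular matrices to upper triangular matrices, and as it merely permutes the matrix units $\{e_{ij}\mid i\le j\}$, it is a linear bijection of $\text{UJ}_n$.

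Next I would check that $\varphi$ reverses associative products: using $(xy)^{T}=y^{T}x^{T}$ and $J^{2}=1$ one gets $\varphi(xy)=Jy^{T}x^{T}J=(Jy^{T}J)(Jx^{T}J)=\varphi(y)\varphi(x)$. Since the Jordan product is symmetric, this anti-multiplicativity turns into multiplicativity: $\varphi(a\circ b)=\varphi(ab)+\varphi(ba)=\varphi(b)\varphi(a)+\varphi(a)\varphi(b)=\varphi(a)\circ\varphi(b)$. Hence $\varphi$ is a homomorphism of Jordan algebras, and, being bijective, an automorphism of the underlying algebra $\text{UJ}_n$.

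Finally I would confront $\varphi$ with the two gradings. Writing $\eta=(g_1,\ldots,g_{n-1})$, the factorization $e_{ij}=e_{i,i+1}\circ e_{i+1,i+2}\circ\cdots\circ e_{j-1,j}$ together with Lemma~\ref{basic_element} shows that $e_{ij}$ has $\eta$-degree $g_ig_{i+1}\cdots g_{j-1}$ for $i<j$ (and degree $1$ when $i=j$). Applying the same formula to $\text{rev}\,\eta=(g_{n-1},\ldots,g_1)$ at position $(n-j+1,n-i+1)$ gives the degree $g_{j-1}g_{j-2}\cdots g_i$. The only delicate point of the whole argument is precisely this index bookkeeping; once it is carried out, commutativity of $G$ (our standing hypothesis in this section) is exactly what identifies $g_{j-1}\cdots g_i$ with $g_i\cdots g_{j-1}$, so $\varphi$ sends each homogeneous component of $(\text{UJ}_n,\eta)$ onto the corresponding component of $(\text{UJ}_n,\text{rev}\,\eta)$, and the proof is complete.
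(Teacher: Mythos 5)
Your proof is correct: exhibiting $\varphi$ as $x\mapsto Jx^{T}J$ with the exchange matrix $J$, deducing that it is a Jordan automorphism from the associative anti-automorphism property, and matching the degrees of $e_{ij}$ under $\eta$ and of $e_{n-j+1,n-i+1}$ under $\text{rev}\,\eta$ via commutativity (which holds by the standing abelian hypothesis, or already by Lemma~\ref{basic_element}(iii)) is exactly the ``direct and easy verification'' the paper leaves to the reader. No gaps; your write-up simply supplies the details the paper omits.
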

	\begin{proof}
		The proof is a direct and easy verification.
	\end{proof}
	
We recall notations and results of \cite{BL1992, HY}. Denote by $S_m$ the set of permutations of $m$ elements and let 
\[
		\mathscr{T}_m=\{\sigma\in S_m\mid\sigma(1)>\cdots>\sigma(t)=1,\sigma(t+1)<\cdots<\sigma(m)\}.
\]
	Using same argument as \cite[Lemma 3 (ii)]{HY}, one can prove
	\begin{Lemma}\label{lemmaum}
		Let $r_1$, \dots, $r_m$ be strictly upper triangular matrix units such that the associative product $r_1\cdots r_m\ne0$, and let $\sigma\in S_m$. Then $r_{\sigma^{-1}(1)}\circ\cdots\circ r_{\sigma^{-1}(m)}\ne0$ if and only if $\sigma\in\mathscr{T}_m$.
	\end{Lemma}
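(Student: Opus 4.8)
The plan is to reduce the statement to the corresponding fact about the associative product, which is precisely what is used in the cited \cite[Lemma 3 (ii)]{HY}. First I would expand the Jordan product $r_{\sigma^{-1}(1)}\circ\cdots\circ r_{\sigma^{-1}(m)}$ (left-normed) as a sum of $2^{m-1}$ associative monomials: at each of the $m-1$ applications of $\circ$ one chooses whether the accumulated factor multiplies the new matrix unit on the left or on the right. Each such monomial is a product, in some order, of all of $r_1,\dots,r_m$; and since the $r_i$ are strictly upper triangular matrix units, each monomial is either $0$ or equals the unique nonzero ordered product $r_1\cdots r_m$ (the ordering being forced by the column-equals-row matching condition, which here holds for the identity permutation by hypothesis). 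So the whole expression equals $c_\sigma\, r_1\cdots r_m$ for some nonnegative integer $c_\sigma$ counting the admissible sign/order choices, and the lemma becomes the combinatorial claim $c_\sigma>0 \iff \sigma\in\mathscr{T}_m$.

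Next I would identify which associative orderings of $r_1,\dots,r_m$ can actually arise from bracketing choices in the left-normed Jordan product indexed by $\sigma$. Inserting $r_{\sigma^{-1}(k)}$ at step $k$ either on the far left or the far right of the current word shows, by induction on $k$, that the words obtainable are exactly those in which $r_{\sigma^{-1}(k)}$ is an outermost letter (leftmost or rightmost) at the moment it is inserted; equivalently, reading the final word, it is built by a sequence of ``prepend or append'' moves. A word $r_{\tau(1)}\cdots r_{\tau(m)}$ arises in this way (from the insertion order $\sigma^{-1}(1),\dots,\sigma^{-1}(m)$) precisely when each $\sigma^{-1}(k)$ occupies an end of the sub-block it spans — which forces the final word, after relabelling by $\sigma$, to first decrease to the position of the index $1$ and then increase, i.e. the only orderings that can be produced while respecting the admissibility (nonvanishing) constraint $r_1\cdots r_m\neq0$ force $\sigma\in\mathscr{T}_m$. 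Conversely, for $\sigma\in\mathscr{T}_m$ one exhibits an explicit bracketing: insert the indices in decreasing order of $\sigma$-value on the left part and increasing order on the right part, so that at every step the ordered product remains a contiguous factor of $r_1\cdots r_m$ and hence nonzero; this gives $c_\sigma\geq 1$.

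Finally I would assemble these two directions into the iff and note that characteristic $\neq 2$ is irrelevant here since the $c_\sigma$ are genuine nonnegative integers and we only need $c_\sigma\neq 0$, not its exact value. The main obstacle I anticipate is the bookkeeping in the converse-and-forward combinatorial step: making precise the claim that a left-normed Jordan product of matrix units, expanded associatively, yields only ``prepend/append'' orderings, and that among those the nonvanishing one forces the $\mathscr{T}_m$ shape. This is exactly the content abstracted from \cite[Lemma 3 (ii)]{HY}, so in the write-up I would either invoke that lemma essentially verbatim or reproduce its short induction; I do not expect any genuinely new difficulty beyond translating from the associative setting to the Jordan one via the $2^{m-1}$-term expansion.
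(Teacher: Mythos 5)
Your proposal is correct and follows essentially the same route as the paper, which gives no independent argument but defers to the combinatorics of \cite[Lemma 3 (ii)]{HY}: expand the left-normed Jordan product into $2^{m-1}$ associative words, note the only nonzero ordering of strictly upper triangular matrix units is $r_1\cdots r_m$, and characterize via prepend/append insertions exactly the permutations in $\mathscr{T}_m$. One small refinement to your final remark: over a field of characteristic $p>2$ it is not quite enough that the integer $c_\sigma$ be nonzero; but since the arrangement $r_1\cdots r_m$ arises from at most one sequence of prepend/append choices, in fact $c_\sigma\in\{0,1\}$, so no characteristic issue occurs.
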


In analogy with \cite{VinKoVa2004} we define

	\begin{Def}
Let $G$ be a group and let $(\text{UJ}_n,\eta)$ be an elementary $G$-grading. Let $\mu=(a_1,\ldots,a_m)\in G^m$ be any sequence.
		\begin{enumerate}
			\item (See \cite{VinKoVa2004}) The sequence $\mu$ is associative $\eta$-good if there exist strictly upper triangular matrix units $r_1$, \dots, $r_m\in UT_n$ such that $r_1\cdots r_m\ne0$ and $\deg r_i = a_i$ for every $i=1$, \dots, $m$. Otherwise $\mu$ is associative $\eta$-bad sequence.
			
			\item The sequence $\mu$ is Jordan $\eta$-good if there exist strictly upper triangular matrix units $r_1$, \dots, $r_m$ such that $r_1\circ\cdots\circ r_m\ne0$ and $\deg r_i=a_i$, for every $i=1$, \dots, $m$. Otherwise $\mu$ is Jordan $\eta$-bad sequence.
		\end{enumerate}
	\end{Def}
	
	\begin{Def}
If $\mu=(a_1,a_2,\cdots,a_m)\in G^m$ we define
\[
			f_\mu=f_1^{(a_1)}\circ f_2^{(a_2)}\circ\cdots\circ f_m^{(a_m)}
\]
where
\[
			f_h^{(a)}=\left\{\begin{array}{ll}
				(x_{3h-2}^{(1)},x_{3h-1}^{(1)},x_{3h}^{(1)}),&\text{if $a=1$},\\
				x_h^{(a)},&\text{if $a\ne1$}
			\end{array}\right..
\]
	\end{Def}

The following lemma is proved exactly in the same way as Proposition 2.2 of \cite{VinKoVa2004}.
	\begin{Lemma}\label{lemmadois}
		A sequence $\mu$ is Jordan $\eta$-bad if and only if $f_\mu$ is a $G$-graded identity for $(\text{UJ}_n,\eta)$.
	\end{Lemma}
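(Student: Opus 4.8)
The plan is to follow the template of Proposition 2.2 of \cite{VinKoVa2004}, adapting it to the Jordan setting via Lemma \ref{lemmaum}. The statement has two directions. For the easy direction, suppose $\mu=(a_1,\ldots,a_m)$ is Jordan $\eta$-bad; I would show $f_\mu$ is a graded identity. Evaluate $f_\mu$ on arbitrary homogeneous elements: each variable $x_h^{(a)}$ (or the triple of variables inside an associator when $a=1$) may be taken to be a homogeneous element of degree $a$, and since $\text{UJ}_n$ is spanned by matrix units it suffices to substitute matrix units. The substitution of $x_h^{(a)}$ by a matrix unit $r_h$ of degree $a$ is direct; for the associator slot $f_h^{(1)}=(x_{3h-2},x_{3h-1},x_{3h})$ one substitutes three matrix units each of degree $1$, and one checks that the associator of three such matrix units is again (a scalar multiple of) a matrix unit of degree $1$ — here one uses that in $\text{UJ}_n$ the associator $(e_{ij},e_{kl},e_{pq})$, when nonzero, lands in the span of matrix units (this needs the characteristic $\neq 2$ hypothesis and a short computation with diagonal and strictly-upper contributions). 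Then the value of $f_\mu$ is a linear combination of Jordan products $r_1\circ\cdots\circ r_m$ of matrix units with $\deg r_i=a_i$; since $\mu$ is bad every such product vanishes, so $f_\mu$ evaluates to $0$.

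For the converse, suppose $\mu$ is Jordan $\eta$-good, so there exist strictly upper triangular matrix units $r_1,\ldots,r_m$ with $\deg r_i=a_i$ and $r_1\circ\cdots\circ r_m\neq0$. I want a substitution making $f_\mu$ nonzero. For each index $h$ with $a_h\neq1$ substitute $x_h^{(a_h)}\mapsto r_h$. For each index $h$ with $a_h=1$: since $\deg r_h = 1$, write $r_h=e_{i,i+1}$ (using the grading data $\eta$; a degree-$1$ strictly-upper matrix unit arises as an appropriate product of consecutive $e_{k,k+1}$). I would substitute the triple $(x_{3h-2},x_{3h-1},x_{3h})$ by strictly-upper matrix units whose associator equals a nonzero multiple of $r_h$ — for instance using a computation of the form $(e_{ij},e_{jk},e_{kl})$ or a diagonal-times-matrix-unit pattern whose associator collapses to the desired matrix unit. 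Then $f_\mu$ evaluated at this substitution equals a nonzero scalar times $r_1\circ\cdots\circ r_m\neq0$, via Lemma \ref{lemmaum} controlling which reorderings survive; hence $f_\mu$ is not an identity.

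The main obstacle, and the point requiring genuine care rather than citation, is the behavior of the associator of matrix units inside $\text{UJ}_n$: in the associative case the analogous object is just a product of matrix units, which is either $0$ or a single matrix unit, whereas here $(e_{ij},e_{kl},e_{pq})$ can a priori produce a sum of matrix units (including diagonal ones), so I must pin down exactly when it is nonzero and, in the ``good'' direction, exhibit specific matrix units whose associator is a nonzero multiple of a prescribed degree-$1$ matrix unit. This is where the hypothesis $\operatorname{char} K\neq 2$ enters (the factor $2$ from $a\circ a = 2a$ and from expanding associators must be invertible), and where one genuinely leans on Lemma \ref{lemmaum} to know that, after all substitutions, the surviving terms in the expansion of the big Jordan product $f_\mu$ are precisely those indexed by $\mathscr{T}_m$, matching the nonvanishing criterion for $r_1\circ\cdots\circ r_m$. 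Once the associator computation is isolated as a small lemma, the rest parallels \cite{VinKoVa2004} almost verbatim, as the statement already advertises.
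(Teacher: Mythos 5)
Your proposal is correct and follows the same route as the paper, which itself only says the lemma "is proved exactly in the same way as Proposition 2.2 of \cite{VinKoVa2004}": multilinearity reduces everything to matrix-unit substitutions, the associator slots force degree-$1$ contributions to be strictly upper triangular, and in the good direction one fills each associator slot so that it evaluates to the prescribed $r_h$. Two cosmetic touch-ups: the pattern that works there is the diagonal one, e.g. $(e_{ii},e_{ii},e_{ij})=e_{ij}$ (the chain $(e_{ij},e_{jk},e_{kl})$ has zero associator, and a degree-$1$ unit need not be $e_{i,i+1}$), and Lemma~\ref{lemmaum} is not needed in that direction since the evaluation is literally $r_1\circ\cdots\circ r_m$.
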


If $S$ is any set and $s=(s_1,\ldots,s_m)\in S^m$ is any sequence of symbols, we define the left action of $S_m$ on $S^m$ by
\[
		\sigma s=(s_{\sigma^{-1}(1)},\ldots,s_{\sigma^{-1}(m)}),\sigma\in S_m.
\]
	The unique non-zero associative product of $n-1$ strictly upper triangular matrix units of $UT_n$ is $e_{12}e_{23}\cdots e_{n-1,n}$ (see \cite{VinKoVa2004}), so combining this fact, Lemma~\ref{lemmaum}, and Lemma~\ref{lemmadois}, we obtain
	\begin{Lemma}\label{lemmatres}
		A sequence $\mu\in G^{n-1}$ is Jordan $\eta$-good for $(\text{UJ}_n,\eta)$ if and only if $\mu=\sigma\eta$ for some $\sigma\in\mathscr{T}_{n-1}$.
	\end{Lemma}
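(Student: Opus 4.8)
The plan is to combine the three facts assembled just before the statement: (a) the unique nonzero associative product of $n-1$ strictly upper triangular matrix units of $UT_n$ is, up to nothing, $e_{12}e_{23}\cdots e_{n-1,n}$; (b) Lemma~\ref{lemmaum}, which tells us exactly which permutations of a nonzero associative product of strictly upper triangular matrix units remain nonzero under the Jordan product, namely those in $\mathscr{T}_m$; and (c) Lemma~\ref{lemmadois}, reformulating Jordan $\eta$-badness as the vanishing of $f_\mu$. In fact for this particular statement we mostly need (a) and (b), with (c) playing a conceptual role; the argument is essentially a translation between sequences of group elements and sequences of matrix units.

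First I would prove the ``if'' direction. Suppose $\mu=\sigma\eta$ for some $\sigma\in\mathscr{T}_{n-1}$. Set $r_i=e_{i,i+1}$, so that $r_1r_2\cdots r_{n-1}=e_{1n}\ne0$ and $\deg r_i=g_i$ where $\eta=(g_1,\ldots,g_{n-1})$. By Lemma~\ref{lemmaum} applied with $m=n-1$, the Jordan product $r_{\sigma^{-1}(1)}\circ\cdots\circ r_{\sigma^{-1}(n-1)}$ is nonzero because $\sigma\in\mathscr{T}_{n-1}$. The degree sequence of this Jordan product is precisely $(\deg r_{\sigma^{-1}(1)},\ldots,\deg r_{\sigma^{-1}(n-1)})=(g_{\sigma^{-1}(1)},\ldots,g_{\sigma^{-1}(n-1)})=\sigma\eta=\mu$. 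Hence $\mu$ is Jordan $\eta$-good.

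For the ``only if'' direction, suppose $\mu=(a_1,\ldots,a_{n-1})$ is Jordan $\eta$-good, witnessed by strictly upper triangular matrix units $s_1,\ldots,s_{n-1}$ with $s_1\circ\cdots\circ s_{n-1}\ne0$ and $\deg s_i=a_i$. Since a nonzero Jordan product of matrix units forces the corresponding associative product (in some order) to be nonzero, there is a permutation $\tau$ with $s_{\tau^{-1}(1)}s_{\tau^{-1}(2)}\cdots s_{\tau^{-1}(n-1)}\ne0$ in $UT_n$; writing $r_i=s_{\tau^{-1}(i)}$ we have $r_1\cdots r_{n-1}\ne0$, so by fact (a) this product equals $e_{12}e_{23}\cdots e_{n-1,n}$, forcing $r_i=e_{i,i+1}$ for each $i$ and hence $\deg r_i=g_i$. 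Now $s_1\circ\cdots\circ s_{n-1}=r_{\tau(1)}\circ\cdots\circ r_{\tau(n-1)}$, so by Lemma~\ref{lemmaum} this being nonzero forces $\tau^{-1}\in\mathscr{T}_{n-1}$ — one must be slightly careful here to match the indexing convention $\sigma s=(s_{\sigma^{-1}(1)},\ldots,s_{\sigma^{-1}(m)})$ so that the relevant permutation really is the $\sigma$ from the statement of the lemma. Then $a_i=\deg s_i=\deg r_{\tau(i)}=g_{\tau(i)}$, and setting $\sigma=\tau^{-1}$ gives $\mu=(g_{\sigma^{-1}(1)},\ldots,g_{\sigma^{-1}(n-1)})=\sigma\eta$ with $\sigma\in\mathscr{T}_{n-1}$.

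The only delicate point — and the step I would double-check most carefully — is the bookkeeping of which permutation lives in $\mathscr{T}_{n-1}$: Lemma~\ref{lemmaum} is stated for $r_{\sigma^{-1}(1)}\circ\cdots\circ r_{\sigma^{-1}(m)}$ with $r_1\cdots r_m\ne 0$, while the Jordan-good hypothesis produces an arbitrary ordering of the $s_i$; reconciling these with the left-action convention $\sigma s=(s_{\sigma^{-1}(1)},\ldots,s_{\sigma^{-1}(m)})$ requires tracking whether one needs $\tau$ or $\tau^{-1}$, but since $\mathscr{T}_{n-1}$ is not closed under inversion this matters. Everything else is a routine substitution, and no real computation is needed beyond invoking fact (a) to pin down $r_i=e_{i,i+1}$.
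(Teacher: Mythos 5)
Your proposal is correct and follows essentially the same route as the paper, which obtains the lemma precisely by combining the uniqueness of the nonzero associative product $e_{12}e_{23}\cdots e_{n-1,n}$ with Lemma~\ref{lemmaum} (and Lemma~\ref{lemmadois} only for the later identity-theoretic corollary). Your careful handling of the $\tau$ versus $\tau^{-1}$ bookkeeping under the convention $\sigma s=(s_{\sigma^{-1}(1)},\ldots,s_{\sigma^{-1}(m)})$ is exactly the point that needs attention, and you resolved it correctly.
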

The following lemma was proved in \cite{HY}.
	\begin{Lemma}[\cite{HY}]\label{lemmaquatro}
		Let $s$, $s'\in S^m$ be any sequences where $S$ is a set. Then $s=s'$ or $s=\text{rev}\,s'$ if and only if for every $\sigma$, $\tau'\in\mathscr{T}_{n-1}$ we can find $\sigma'$, $\tau\in\mathscr{T}_{n-1}$ such that $\sigma s=\sigma' s'$ and $\tau s=\tau's'$.
	\end{Lemma}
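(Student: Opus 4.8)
I would first reformulate the right‑hand side as a single orbit equality. Write $O(u)=\{\rho u\mid\rho\in\mathscr{T}_m\}$ for $u\in S^m$. The clause ``for every $\sigma$ there is $\sigma'$ with $\sigma s=\sigma's'$'' says exactly $O(s)\subseteq O(s')$, and ``for every $\tau'$ there is $\tau$ with $\tau s=\tau's'$'' says $O(s')\subseteq O(s)$; hence the lemma is equivalent to $O(s)=O(s')\iff s\in\{s',\text{rev}\,s'\}$. For $(\Leftarrow)$ the point is that $\mathscr{T}_m$ is stable under right multiplication by the order‑reversing permutation $w_0\colon i\mapsto m+1-i$: the one‑line notation of $\sigma w_0$ is the reverse of that of $\sigma$, and the reverse of a word that strictly decreases to $1$ and then strictly increases is again of this shape, so $\sigma\in\mathscr{T}_m$ implies $\sigma w_0\in\mathscr{T}_m$, and since $w_0^2=e$ this is a bijection of $\mathscr{T}_m$. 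Because $\text{rev}\,u=w_0u$ and the action is on the left, $O(\text{rev}\,s')=(\mathscr{T}_m w_0)s'=\mathscr{T}_m s'=O(s')$; together with the trivial $O(s')=O(s')$ this proves $(\Leftarrow)$.

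For $(\Rightarrow)$ I would argue by induction on $m$, the cases $m\le2$ being immediate since $\mathscr{T}_1=\{e\}$ and $\mathscr{T}_2=S_2$. The engine is a recursive description of $O(u)$. The crucial feature of $\mathscr{T}_m$ is that for $\sigma\in\mathscr{T}_m$ the value $m$ occupies position $1$ or position $m$ of the one‑line notation (the interior positions carry the minimum value $1$), i.e.\ $\sigma(1)=m$ or $\sigma(m)=m$. The two classes $\{\sigma(m)=m\}$ and $\{\sigma(1)=m\}$ are each in bijection with $\mathscr{T}_{m-1}$ (restrict $\sigma$ to positions $\{1,\dots,m-1\}$, resp.\ $\{2,\dots,m\}$); writing $\hat u$, $\check u$ for $u$ with its last, resp.\ first, entry removed and $X\bullet c=\{(x_1,\dots,x_{m-1},c)\mid x\in X\}$, this yields
\[
O(u)=\bigl(O(\hat u)\bullet u_m\bigr)\cup\bigl(O(\check u)\bullet u_1\bigr).
\]
In particular every element of $O(u)$ ends in $u_1$ or $u_m$, and both are realized (via $\rho=e$ and $\rho=w_0$), so $O(s)=O(s')$ forces $\{s_1,s_m\}=\{s'_1,s'_m\}$ as subsets of $S$. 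The argument now splits according to whether $s_1=s_m$.

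Assume first $s_1\ne s_m$. Replacing $s'$ by $\text{rev}\,s'$ if necessary — harmless, as $O(\text{rev}\,s')=O(s')$ and the conclusion $s\in\{s',\text{rev}\,s'\}$ is unchanged — we may take $s_1=s'_1=:p$ and $s_m=s'_m=:q$ with $p\ne q$. Applying the recursion to $s$, the elements of $O(s)$ with last entry $q$ form exactly $O(\hat s)\bullet q$ (here $p\ne q$ is used), and applying it to $s'$ they form $O(\hat{s'})\bullet q$; hence $O(\hat s)=O(\hat{s'})$, and likewise $O(\check s)=O(\check{s'})$. By induction $\hat s\in\{\hat{s'},\text{rev}\,\hat{s'}\}$ and $\check s\in\{\check{s'},\text{rev}\,\check{s'}\}$. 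If $\hat s=\hat{s'}$ then $s_i=s'_i$ for $i\le m-1$ and $s_m=q=s'_m$, so $s=s'$; symmetrically if $\check s=\check{s'}$. In the only remaining possibility, $\hat s=\text{rev}\,\hat{s'}$ and $\check s=\text{rev}\,\check{s'}$, comparing coordinates forces $s'_\ell=s'_{\ell+2}$ for $1\le\ell\le m-2$, $s'_1=s'_{m-1}$, and $s'_1\ne s'_m$; this can occur only for $m$ even with $s'=(p,q,p,q,\dots,p,q)$, where the same comparison again gives $s=s'$.

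The case $s_1=s_m$ is the main obstacle. One checks that then $s'_1=s'_m$ as well and $s_1=s_m=s'_1=s'_m=:q$, and stripping the common last entry from $O(s)=O(s')$ now only gives $O(\hat s)\cup O(\check s)=O(\hat{s'})\cup O(\check{s'})$, a union of two orbits rather than a single orbit equality, so the recombination above no longer applies directly. I would resolve it by a secondary induction on the number of entries of $s$ equal to $q$: the first entry of $\rho s$ is $s_j$, where $j$ is the position of the value $1$ in the one‑line notation of $\rho\in\mathscr{T}_m$, and $\rho$ is forced to be $e$ or $w_0$ when $j\in\{1,m\}$; hence if $q$ occurs in $s$ only at the two endpoints, the only elements of $O(s)$ beginning with $q$ are $s$ and $\text{rev}\,s$, which forces $s'\in\{s,\text{rev}\,s\}$ since $s'$ begins with $q$. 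When $q$ occurs in the interior of $s$ too, one peels off such an occurrence and descends using the recursion together with the identity $O(\hat s)=O(\text{rev}\,\hat s)$. This degenerate case carries essentially all of the bookkeeping, but it needs no idea beyond the recursion and the structural facts about $\mathscr{T}_m$ already in hand.
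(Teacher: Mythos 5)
There is nothing to compare against in the paper itself: Lemma~\ref{lemmaquatro} is quoted from \cite{HY} without proof, so your argument has to stand on its own. Most of it does. The reformulation as the orbit equality $O(s)=O(s')$ for $O(u)=\mathscr{T}_m u$ is exactly right; the stability of $\mathscr{T}_m$ under right multiplication by $w_0$, hence $O(\mathrm{rev}\,u)=O(u)$ and the direction $(\Leftarrow)$, is correct; the recursion $O(u)=\bigl(O(\hat u)\bullet u_m\bigr)\cup\bigl(O(\check u)\bullet u_1\bigr)$ and the fact that every element of $O(u)$ ends in $u_1$ or $u_m$ are correct; the whole case $s_1\ne s_m$, including the alternating-sequence subcase, checks out; and so does the subcase of $s_1=s_m=q$ in which $q$ occurs only at the two endpoints (there $\rho^{-1}(1)\in\{1,m\}$ indeed forces $\rho\in\{e,w_0\}$). (The statement's $\mathscr{T}_{n-1}$ versus $S^m$ is a typo you interpret correctly as $\mathscr{T}_m$.)

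The genuine gap is the last subcase, $s_1=s_m=q$ with $q$ also occurring in the interior, which you only announce. Your secondary induction is on the number of occurrences of $q$, but the only operations you have proved compatible with the $\mathscr{T}_m$-action are deletion of the first or the last entry, and, as you note yourself, these give only $O(\hat s)\cup O(\check s)=O(\hat{s'})\cup O(\check{s'})$. In this degenerate case that union cannot be split by the last-entry device of the case $s_1\ne s_m$: $\hat s$ and $\check s$ have the same multiset of entries (each is $s$ minus one $q$), $\hat s$ begins with $q$ and $\check s$ ends with $q$, so no endpoint statistic separates the two orbits; and ``peeling off an interior occurrence of $q$'' is not an operation for which you have any lemma relating the orbits of the shortened sequences to $O(s)=O(s')$. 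The inductive step is therefore missing, and it is exactly where the content lies: for $s=(q,a,q,b,q)$ the set $O(s)$ contains $(q,a,b,q,q)$ and $(q,b,a,q,q)$, which begin and end with $q$, so the endpoint/first-entry arguments you have established do not exclude them as candidates for $s'$; ruling them out (i.e.\ showing $O((q,a,b,q,q))\ne O(s)$) needs a further idea --- some invariant of the full orbit, or a sharper recursion --- that the proposal does not supply.
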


	Combining Lemmas \ref{lemmatres} and \ref{lemmaquatro}, we obtain
	\begin{Cor}
		Let $\eta$, $\eta'\in G^{n-1}$ with $\eta\ne\eta'$ and $\eta\ne\text{rev}\,\eta'$. Then $(\text{UJ}_n,\eta)\not\simeq(\text{UJ}_n,\eta')$.
	\end{Cor}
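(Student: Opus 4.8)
The plan is to deduce the Corollary directly by combining Lemma~\ref{lemmatres} and Lemma~\ref{lemmaquatro}, arguing by contraposition. Suppose, on the contrary, that $(\text{UJ}_n,\eta)\simeq(\text{UJ}_n,\eta')$ as $G$-graded algebras; I want to conclude that $\eta=\eta'$ or $\eta=\text{rev}\,\eta'$, which contradicts the hypothesis. The key observation is that a graded isomorphism preserves graded identities, so $(\text{UJ}_n,\eta)$ and $(\text{UJ}_n,\eta')$ satisfy exactly the same $G$-graded polynomial identities. By Lemma~\ref{lemmadois}, for a sequence $\mu\in G^{n-1}$ the polynomial $f_\mu$ is a graded identity for $(\text{UJ}_n,\eta)$ if and only if $\mu$ is Jordan $\eta$-bad; hence $\mu$ is Jordan $\eta$-good exactly when $\mu$ is Jordan $\eta'$-good. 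In other words, the two elementary gradings have the same set of Jordan-good sequences of length $n-1$.

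Next I would translate this coincidence of good sequences into the combinatorial hypothesis of Lemma~\ref{lemmaquatro}. By Lemma~\ref{lemmatres}, the set of Jordan $\eta$-good sequences in $G^{n-1}$ is precisely $\{\sigma\eta : \sigma\in\mathscr{T}_{n-1}\}$, and likewise for $\eta'$ it is $\{\sigma'\eta' : \sigma'\in\mathscr{T}_{n-1}\}$. Equality of these two sets means: for every $\sigma\in\mathscr{T}_{n-1}$ there is some $\sigma'\in\mathscr{T}_{n-1}$ with $\sigma\eta=\sigma'\eta'$, and symmetrically for every $\tau'\in\mathscr{T}_{n-1}$ there is $\tau\in\mathscr{T}_{n-1}$ with $\tau\eta=\tau'\eta'$ (note $\mathscr{T}_{n-1}$ always contains the identity permutation, so $\eta$ itself and $\eta'$ itself are good sequences for their respective gradings, which makes the two-sided matching genuinely available). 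This is exactly the condition appearing in Lemma~\ref{lemmaquatro} applied with $S=G$, $m=n-1$, $s=\eta$, $s'=\eta'$. Therefore Lemma~\ref{lemmaquatro} yields $\eta=\eta'$ or $\eta=\text{rev}\,\eta'$, the desired contradiction.

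The main point requiring a little care is the bookkeeping of quantifiers when passing from ``the good-sequence sets coincide'' to the precise biconditional phrasing of Lemma~\ref{lemmaquatro}: one must check that set equality gives the matching in \emph{both} directions (both a $\sigma'$ for each $\sigma$ and a $\tau$ for each $\tau'$), which is immediate since equality of sets is symmetric. A secondary subtlety worth a sentence is the implicit assumption that $G$ is abelian (in force ``from here on in this section''), which is what makes the elementary grading $(\text{UJ}_n,\eta)$ well-defined from the vector $\eta\in G^{n-1}$ via Lemma~\ref{basic_element}; otherwise the construction and the preceding lemmas would not apply. No genuine obstacle arises here: the Corollary is a formal consequence of the two cited lemmas once the invariance of graded identities under graded isomorphism is invoked.
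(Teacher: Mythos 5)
Your argument is correct and is essentially the paper's own proof run in the contrapositive direction: both rest on the invariance of graded identities under graded isomorphism together with Lemmas~\ref{lemmadois}, \ref{lemmatres} and \ref{lemmaquatro}, the paper merely starting from $\eta\ne\eta'$, $\eta\ne\text{rev}\,\eta'$ to extract, via Lemma~\ref{lemmaquatro}, a single sequence $\sigma\eta$ that is Jordan $\eta$-good but $\eta'$-bad and hence a distinguishing identity $f_{\sigma\eta}$, whereas you assume an isomorphism and deduce equality of the good-sequence sets before applying Lemma~\ref{lemmaquatro}. The content and the lemmas used are the same, so no further comment is needed.
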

	\begin{proof}
		By Lemma \ref{lemmaquatro}, there exists $\sigma\in\mathscr{T}_m$ such that $\sigma\eta\ne\sigma'\eta'$ for each $\sigma'\in\mathscr{T}_m$, interchanging $\eta$ and $\eta'$ if necessary. By Lemma \ref{lemmatres}, $\sigma\eta$ is Jordan $\eta$-good sequence but Jordan $\eta'$-bad sequence, hence $f_{\sigma\eta}$ is not a graded identity for $(\text{UJ}_n,\eta)$, but it is a graded identity for $(\text{UJ}_n,\eta')$. In particular, $(\text{UJ}_n,\eta)\not\simeq(\text{UJ}_n,\eta')$.
	\end{proof}

In this way we have a classification of the elementary gradings on $\text{UJ}_n$:
	\begin{Thm}\label{class_element}
		The support of an elementary $G$-grading on $\text{UJ}_n$ is commutative.

		Let $G$ be an abelian group and define the equivalence relation on $G^{n-1}$ as follows. Let $\mu_1$ and $\mu_2=(a_1,a_2,\ldots,a_{n-1})\in G^{n-1}$, then $\mu_1\sim\mu_2$ whenever $\mu_1=\mu_2$ or $\mu_1=(a_{n-1},\cdots,a_2,a_1)$.

		Then there is 1--1 correspondence between $G^{n-1}/\sim$ and the class of non-isomorphic elementary $G$-gradings on $\text{UJ}_n$.
	\end{Thm}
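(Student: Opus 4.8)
The plan is to assemble the statement from results already established. The first assertion --- that the support of any elementary $G$-grading on $\text{UJ}_n$ is commutative --- is precisely Lemma~\ref{basic_element}(iii), so nothing remains to be done there; I note in passing that this is exactly what legitimizes the restriction to abelian $G$ made in the second part of the theorem.

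For the bijective correspondence I would argue in three steps. First, well-definedness of $\eta\mapsto(\text{UJ}_n,\eta)$ on $G^{n-1}/\!\sim$: by definition $\mu_1\sim\mu_2$ means $\mu_1=\mu_2$ or $\mu_1=\text{rev}\,\mu_2$, and the reversal lemma proved above (the isomorphism $e_{ij}\mapsto e_{n-j+1,n-i+1}$ from $(\text{UJ}_n,\eta)$ to $(\text{UJ}_n,\text{rev}\,\eta)$) shows that $\eta\sim\eta'$ implies $(\text{UJ}_n,\eta)\simeq(\text{UJ}_n,\eta')$ as $G$-graded algebras; hence the assignment descends to a well-defined map on the quotient. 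Second, surjectivity: given an arbitrary elementary $G$-grading on $\text{UJ}_n$, put $g_i=\deg e_{i,i+1}$ and $\eta=(g_1,\dots,g_{n-1})$; by Lemma~\ref{basic_element}(ii) the grading is then exactly $(\text{UJ}_n,\eta)$, and conversely every $\eta\in G^{n-1}$ does define an elementary grading by setting $\deg e_{ij}=g_i g_{i+1}\cdots g_{j-1}$ for $i<j$ and $\deg e_{ii}=1$, since each homogeneous component that survives in a Jordan product $e_{ij}\circ e_{kl}$ carries precisely the prescribed degree. Third, injectivity up to $\sim$: this is exactly the Corollary preceding the theorem, namely that $\eta\neq\eta'$ together with $\eta\neq\text{rev}\,\eta'$ forces $(\text{UJ}_n,\eta)\not\simeq(\text{UJ}_n,\eta')$. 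Combining the three steps yields the claimed $1$--$1$ correspondence.

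I do not expect any serious obstacle here: all the substantive content is already in the earlier lemmas (culminating in Lemmas~\ref{lemmatres} and~\ref{lemmaquatro} and the Corollary they produce). The only point deserving a moment's care is surjectivity --- verifying both that an arbitrary elementary grading has the form $(\text{UJ}_n,\eta)$ and that an arbitrary $\eta$ returns a genuine $G$-grading --- and both reduce immediately to Lemma~\ref{basic_element}(ii) together with a one-line check of the homogeneity of the products $e_{ij}\circ e_{kl}$.
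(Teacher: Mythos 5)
Your proposal is correct and follows essentially the same route as the paper, which states the theorem as a direct assembly of the preceding results (Lemma~\ref{basic_element} for commutativity and determination by $\eta$, the reversal isomorphism $e_{ij}\mapsto e_{n-j+1,n-i+1}$ for well-definedness, and the Corollary derived from Lemmas~\ref{lemmatres} and~\ref{lemmaquatro} for injectivity up to $\sim$). Your explicit check that every $\eta\in G^{n-1}$ actually yields a grading is a reasonable small addition that the paper leaves implicit.
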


\section{MT Gradings}
	\noindent\textbf{Notation.} If $i$, $m\in\mathbb{N}$ we denote $Y_{i:m}^+ =e_{i:m}+e_{-i:m}$, and $Y_{i:m}^-=e_{i:m}-e_{-i:m}$.

	\begin{Remark}
In the above notation, if $n-m$ is odd then $Y_{i:m}^+=2e_{i:m}=2e_{-i:m}$, and $Y_{i:m}^-=0$ for $i=(n-m+1)/2$.
	\end{Remark}

	\begin{Def}
A $G$-grading on $\text{UJ}_n$ is of mirror pattern type (or just MT) if all $Y_{i:m}^+$, $Y_{i:m}^-$ are homogeneous and $\deg Y_{i:m}^+\ne \deg Y_{i:m}^-$.
	\end{Def}

	\begin{Lemma}\label{multi_table}
One has $	Y_{i:1}^+\circ Y_{i+1:1}^+ \circ\cdots\circ Y_{i+m-1:1}^+ = \lambda Y_{i:m}^+$ for some $\lambda=2^p$,  $p\in\mathbb{Z}$.
	\end{Lemma}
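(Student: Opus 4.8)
The plan is to argue by induction on $m$. The product on the left is left-normed, so it equals $\bigl(Y_{i:1}^{+}\circ\cdots\circ Y_{i+m-2:1}^{+}\bigr)\circ Y_{i+m-1:1}^{+}$, and the induction reduces everything to the one-step claim
\[
 Y_{i:q}^{+}\circ Y_{i+q:1}^{+}=2^{\epsilon}\,Y_{i:q+1}^{+},\qquad \epsilon\in\{0,1\},
\]
valid for $1\le q$ with $i+q<n$; the base case $m=1$ is trivial with $p=0$. Combining the inductive hypothesis $Y_{i:1}^{+}\circ\cdots\circ Y_{i+q-1:1}^{+}=2^{p'}Y_{i:q}^{+}$ with this claim gives $Y_{i:1}^{+}\circ\cdots\circ Y_{i+q:1}^{+}=2^{p'+\epsilon}Y_{i:q+1}^{+}$. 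Since the exponents accumulate over the $m-1$ steps, one obtains $\lambda=2^{p}$ with $p\ge 0$, in particular $p\in\mathbb{Z}$.

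To prove the one-step claim I would compute directly. Put $u_{1}=e_{i,i+q}$, $u_{2}=e_{n-i-q+1,n-i+1}$, so that $Y_{i:q}^{+}=u_{1}+u_{2}$ unless $n=2i+q-1$, in which case $u_{1}=u_{2}$ and $Y_{i:q}^{+}=2u_{1}$; likewise put $v_{1}=e_{i+q,i+q+1}$, $v_{2}=e_{n-i-q,n-i-q+1}$, so that $Y_{i+q:1}^{+}=v_{1}+v_{2}$ unless $n=2i+2q$, when $v_{1}=v_{2}$ and $Y_{i+q:1}^{+}=2v_{1}$. The two collision conditions $n=2i+q-1$ and $n=2i+2q$ are mutually exclusive (they would force $q=-1$), giving three cases. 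In each case expand the Jordan product bilinearly and evaluate every term through $e_{rs}\circ e_{r's'}=e_{rs}e_{r's'}+e_{r's'}e_{rs}$ and $e_{rs}e_{r's'}=\delta_{sr'}e_{rs'}$; squares vanish automatically since all matrix units here are strictly upper triangular. A short check shows that in every case the surviving terms collect to $e_{i:q+1}+e_{-i:q+1}$, with an extra factor $2$ in front precisely in the two collision cases. Finally $e_{i:q+1}$ and $e_{-i:q+1}$ coincide exactly when $n=2i+q$, and then $Y_{i:q+1}^{+}=2e_{i:q+1}$ by definition, so $e_{i:q+1}+e_{-i:q+1}=Y_{i:q+1}^{+}$ all the same; hence in all cases the outcome is $2^{\epsilon}Y_{i:q+1}^{+}$ with $\epsilon\in\{0,1\}$.

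The only delicate aspect is purely bookkeeping: correctly tracking when the pair of matrix units building a given $Y^{\pm}$ (or the pair $e_{i:q+1}$, $e_{-i:q+1}$) collapses "in the middle" of the matrix, i.e.\ handling index identities such as $n-i-q+1=i$, $n-i-q=i+q$, $n-i-q=i$. A convenient consistency check is the flip $\varphi\colon e_{rs}\mapsto e_{n-s+1,n-r+1}$, which is a Jordan automorphism of $\text{UJ}_{n}$ (the associative anti-automorphism of the earlier lemma) fixing every $Y_{j:1}^{+}$ and every $Y_{i:k}^{+}$; the product in question is therefore automatically $\varphi$-symmetric, which strongly constrains the admissible answers and makes the case analysis essentially mechanical. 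I do not expect any genuine obstacle beyond organizing these cases cleanly.
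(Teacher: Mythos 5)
Your proof is correct and follows essentially the same route as the paper's: induction on $m$, reduced to the single multiplication step $Y_{i:q}^{+}\circ Y_{i+q:1}^{+}=2^{\epsilon}Y_{i:q+1}^{+}$. The only difference is that you spell out the ``middle collision'' case analysis (the factors of $2$ when $n=2i+q-1$ or $n=2i+2q$) that the paper's one-line proof leaves implicit, and your bookkeeping there checks out.
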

	\begin{proof}
Induction on $m$. When $m=1$ the statement is trivial, so assume $m>1$. If
$Y_{i:1}^+\circ Y_{i+1:1}^+ \circ\cdots\circ Y_{i+m-1:1}^+=\lambda Y_{i:m}^+=\lambda(e_{i:m}+e_{-i:m})$ then $(\lambda Y_{i:m}^+)\circ Y_{i+m:1}^+=\lambda'(e_{i:m+1}+e_{-i:m+1})$.
	\end{proof}

	\begin{Lemma}
Let a $G$-grading on $\text{UJ}_n$ be MT, then
		\begin{enumerate}
			\renewcommand{\labelenumi}{(\roman{enumi})}
			\item $ \deg Y_{i:0}^+=1$ for every $i$, and $ \deg Y_{1:0}^-= \deg Y_{2:0}^-=\cdots= \deg Y_{\lfloor\frac{n}2\rfloor:0}^-=t$ is an element of order 2.
			\item Let $q=\left\lceil\frac{n-1}2\right\rceil$, then the sequence $\eta=( \deg Y_{1:1}^+, \deg Y_{2:1}^+, \ldots, \deg Y_{q:1}^+)$ and the element $t= \deg Y_{1:0}^-$ completely define the grading.
			\item The support of the grading is commutative.
		\end{enumerate}
		Moreover, if the elements $Y_{i:m}^\pm$, for each  $i$ and for $m=0$ and $m=1$, are homogeneous, then the grading is necessarily MT.
	\end{Lemma}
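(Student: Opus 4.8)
The plan is to extract the whole lemma from a few explicit Jordan-product identities among the $Y^\pm_{i:m}$, together with Lemma~\ref{multi_table} and cancellation in the group $G$. For (i), the idempotent trick settles the first half: $Y^+_{i:0}\circ Y^+_{i:0}=2Y^+_{i:0}\ne0$ forces $(\deg Y^+_{i:0})^2=\deg Y^+_{i:0}$, so $\deg Y^+_{i:0}=1$; and $Y^-_{i:0}\circ Y^-_{i:0}=2Y^+_{i:0}$ gives $t_i:=\deg Y^-_{i:0}$ with $t_i^2=1$, while $t_i\ne1$ by the MT hypothesis, so $t_i$ has order $2$. To see that the $t_i$ all agree I would verify, by expanding matrix-unit products, the identities
\[
Y^-_{i:0}\circ Y^+_{i:1}=Y^-_{i:1}=Y^-_{i+1:0}\circ Y^+_{i:1}\qquad(1\le i\le\lfloor n/2\rfloor-1),
\]
in which $Y^-_{i:1}\ne0$; passing to degrees and cancelling $\deg Y^+_{i:1}$ gives $t_i=t_{i+1}$, hence $t_1=\dots=t_{\lfloor n/2\rfloor}=:t$.

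For (ii), Lemma~\ref{multi_table} yields $\deg Y^+_{i:m}=\prod_{k=0}^{m-1}\deg Y^+_{i+k:1}$, and since $Y^+_{j:1}=Y^+_{n-j:1}$ every factor equals one of $\deg Y^+_{1:1},\dots,\deg Y^+_{q:1}$, i.e.\ is one of the entries of $\eta$. The identity $Y^-_{i:0}\circ Y^+_{i:m}=Y^-_{i:m}$ --- valid whenever $Y^-_{i:0}\ne0$ and $Y^-_{i:m}\ne0$ --- then gives $\deg Y^-_{i:m}=t\,\deg Y^+_{i:m}$; the only obstruction is the index $i=(n+1)/2$ for $n$ odd, where $Y^-_{i:0}=0$, and there I would pass to the mirror index $i'=(n+1)/2-m$, for which $Y^-_{i':0}\ne0$, using $Y^-_{i:m}=-Y^-_{i':m}$ and $Y^+_{i:m}=Y^+_{i':m}$. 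In the MT case a homogeneous component of $\text{UJ}_n$ is precisely the span of the $Y^\pm_{i:m}$ of that degree (a central doubled matrix unit being a scalar multiple of some $Y^+_{i:m}$), so the pair $(\eta,t)$ determines every degree, hence the grading.

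For (iii) the key --- and essentially free --- observation is that $\circ$ is commutative, so whenever $u,v$ are homogeneous with $u\circ v\ne0$ the elements $\deg u$, $\deg v$ commute. From $Y^+_{i:1}\circ Y^+_{i+1:r}=2^p\,Y^+_{i:r+1}\ne0$ one gets that $\deg Y^+_{i:1}$ commutes with $\prod_{k=1}^{r}\deg Y^+_{i+k:1}$; a short induction on $r$ (base case: consecutive $\deg Y^+_{i:1}$ commute, since $Y^+_{i:1}\circ Y^+_{i+1:1}\ne0$) then makes the $\deg Y^+_{i:1}$ pairwise commuting. Likewise $Y^-_{i:0}\circ Y^+_{i:1}=Y^-_{i:1}\ne0$ (for $n$ even and $i=n/2$, where $Y^-_{n/2:1}=0$, use instead $Y^-_{n/2-1:0}\circ Y^+_{n/2-1:2}=Y^-_{n/2-1:2}\ne0$ together with the fact that $t$ already commutes with $\deg Y^+_{n/2-1:1}$) shows $t$ commutes with every $\deg Y^+_{i:1}$. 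By (ii) the support lies in the subgroup generated by $t$ and the $\deg Y^+_{i:1}$, so it is commutative.

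For the last assertion, note that the MT hypothesis was used above only to guarantee $t\ne1$: assuming merely that the $Y^\pm_{i:m}$ are homogeneous for $m\in\{0,1\}$ and all $i$, the same computations still give $\deg Y^-_{i:0}=t$ with $t^2=1$ and, via Lemma~\ref{multi_table} together with $Y^-_{i:m}=Y^-_{i:0}\circ Y^+_{i:m}$, propagate homogeneity of the $Y^\pm_{i:m}$ to all $m$; then $\deg Y^-_{i:m}=t\,\deg Y^+_{i:m}$, which differs from $\deg Y^+_{i:m}$ exactly when $t\ne1$, so the grading is MT (when $t=1$ it turns out to be elementary). I expect the real work to be not conceptual but bookkeeping near the centre of the matrix: stating each of the identities $Y^-_{i:0}\circ Y^+_{i:1}=Y^-_{i:1}$, $Y^-_{i:0}\circ Y^+_{i:m}=Y^-_{i:m}$, $Y^+_{i:1}\circ Y^+_{i+1:r}=2^p\,Y^+_{i:r+1}$ on the precise index range where it holds with both sides nonzero, and disposing of the degenerate indices ($Y^-_{i:m}=0$ or $Y^+_{i:m}=2e_{i:m}$) by mirror indices. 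Everything else reduces to expanding matrix-unit products and cancelling in $G$.
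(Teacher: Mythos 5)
Your proposal is correct and takes essentially the same route as the paper's own (very terse) proof: the idempotent argument together with the identities $(Y_{i:0}^\pm)^2=2Y_{i:0}^+$ and $Y_{i:0}^-\circ Y_{i:1}^+=Y_{i:1}^-=Y_{i+1:0}^-\circ Y_{i:1}^+$ for (i), Lemma~\ref{multi_table} for (ii), and the observation that equality of nonzero homogeneous products forces their degrees to coincide (the idea of Lemma~\ref{basic_element}(iii), which you implement via commutativity of $\circ$) for (iii); you merely carry out the bookkeeping at the middle indices and supply the ``Moreover'' part, which the paper leaves implicit. The one case neither your argument nor the paper's sketch covers is $n=2$ in part (iii): there your fallback index $n/2-1$ does not exist, and in fact no nonzero product in $\text{UJ}_2$ forces $t$ to commute with $\deg Y_{1:1}^+$, so this is an edge case of the statement itself (for nonabelian $G$) rather than a flaw specific to your proof.
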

	\begin{proof}
		(i) The equalities $Y_{1:0}^-\circ Y_{1:1}^+=Y_{1:1}^-$, $(Y_{i:0}^\pm)^2=2Y_{i:0}^+$ and $Y_{i:0}^-\circ Y_{i:1}^\pm=Y_{i+1:0}^-\circ Y_{i:1}^\pm$ yield the proof.
		
		(ii) It follows from Lemma \ref{multi_table}.
		
		(iii) According to (ii), the elements $\deg Y_{1:0}^-$ and $\deg Y_{i:1}^+$, for all $i$, generate the support of the group. Using Lemma \ref{multi_table} and the same idea as of Lemma \ref{basic_element}.(iii), we  prove the statement.
	\end{proof}

We assume from now on in this section $G$ abelian. We denote by $(\text{UJ}_n,t,\eta)$ the MT-grading defined by $t\in G$ and the sequence $\eta\in G^q$.

It is well known that, if we have an associative algebra with involution $(A,\ast)$, then the decomposition of $A$ into symmetric and skew-symmetric elements with respect to $\ast$ gives rise to a $\mathbb{Z}_2$-graded algebra. If, moreover, $A$ is endowed with an $H$-grading and $\ast$ is a graded involution (that is, $\deg a^\ast=\deg a$, for all homogeneous $a\in A$), then the decomposition cited yields an $H\times\mathbb{Z}_2$-graded Jordan algebra.

The upper triangular matrices possess a natural involution, given by $\psi:e_{i:m}\in UT_n\mapsto e_{-i:m}\in UT_n$. For an elementary grading $\eta$ on $UT_n$, $\psi$ will be a graded involution if and only if $\eta=\text{rev}\,\eta$. It is easy to see that the obtained grading by the involution is an MT-grading.

The following example, in its present form, was suggested by the Referee. 
\begin{Ex}
	Let $G=\mathbb{Z}_4$ and take the MT-grading on $\text{UJ}_4$ given by $\deg Y_{i:1}^+=1\in\mathbb{Z}_4$, and $\deg Y_{i:0}^-=2\in\mathbb{Z}_4$, for every $i$. Since $\mathbb{Z}_4$ is an indecomposable group, it cannot be written in the form $\mathbb{Z}_2\times H$. Therefore there exist MT-gradings that cannot be given by the involution.
\end{Ex}

Below we classify all MT gradings. We recall that, according to \cite{BBC}, every automorphism of $\text{UJ}_n$ is given either by an automorphism of $UT_n$, or by an automorphism of $UT_n$ followed by the involution $e_{i:m}\mapsto e_{-i:m}$. In particular, the ideal $J$ of all strictly upper triangular matrices is invariant under all automorphisms of $\text{UJ}_n$.

	\begin{Lemma}
		Let $\eta$, $\eta'\in G^q$ where $q=\lceil\frac{n-1}2\rceil$ and $t_1$, $t_2\in G$ are elements of order 2. If $t_1\ne t_2$ then $(\text{UJ}_n,t_1,\eta)\not\simeq(\text{UJ}_n,t_2,\eta')$.
	\end{Lemma}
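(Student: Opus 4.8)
The plan is to show that the element $t=\deg Y_{1:0}^-$ is an invariant of the $G$-graded isomorphism class of an MT grading; this is clearly enough to conclude. So suppose $\phi\colon(\text{UJ}_n,t_1,\eta)\to(\text{UJ}_n,t_2,\eta')$ is an isomorphism of $G$-graded algebras. Forgetting the gradings, $\phi$ is an automorphism of $\text{UJ}_n$, hence $\phi(J)=J$ for the ideal $J$ of strictly upper triangular matrices, so $\phi$ induces an automorphism $\bar\phi$ of the quotient Jordan algebra $\text{UJ}_n/J$. Since $\operatorname{char}K\ne2$, the algebra $\text{UJ}_n/J$ is isomorphic to $K\times\cdots\times K$ ($n$ factors), whose automorphisms are exactly the permutations of the factors; hence $\bar\phi$ permutes the classes of the primitive idempotents $e_{11},\dots,e_{nn}$, i.e.\ there is $\pi\in S_n$ with $\phi(e_{ii})\equiv e_{\pi(i)\pi(i)}\pmod J$ for all $i$. (We will not need to know that $\pi$ is the identity or the reversal $i\mapsto n+1-i$.)

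Next I would record that, in any MT grading, both $J$ and the subalgebra $D$ of diagonal matrices are graded subspaces: $J$ is spanned by the homogeneous elements $Y_{i:m}^\pm$ with $m\ge1$ and $D$ by the homogeneous elements $Y_{i:0}^\pm$, and $\text{UJ}_n=D\oplus J$ as graded vector spaces, so the projection $\pi_D\colon\text{UJ}_n\to D$ along $J$ is a homomorphism of graded vector spaces. In the grading $(\text{UJ}_n,t,\eta)$ the homogeneous decomposition of $D$ is $D=\operatorname{span}\{Y_{i:0}^+\}\oplus\operatorname{span}\{Y_{i:0}^-\}$, the first summand of degree $1$ and the second of degree $t$ (by the lemma above $\deg Y_{i:0}^+=1$ and $\deg Y_{i:0}^-=t$ for every $i$, and these elements form a basis of $D$); since $t$ has order $2$, no other degrees occur in $D$. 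From this I extract the crucial claim: for $k\ne l$, the element $e_{kk}-e_{ll}\in D$ is $G$-homogeneous if and only if $l=n+1-k$, in which case $e_{kk}-e_{ll}=\pm Y_{k':0}^-$ with $k'=\min(k,n+1-k)$, so it has degree $t$. The ``if'' direction is immediate; for ``only if'' one writes $e_{kk}-e_{ll}$ in the basis $\{Y_{i:0}^+,Y_{i:0}^-\}$ and checks that if $l\ne n+1-k$ then both its degree-$1$ component $\tfrac12(Y_{k':0}^+-Y_{l':0}^+)$ and its degree-$t$ component are nonzero.

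Finally I would combine these observations. The element $Y_{1:0}^-=e_{11}-e_{nn}$ is homogeneous of degree $t_1$ in the source grading, so $\phi(Y_{1:0}^-)$ is homogeneous of degree $t_1$ in $(\text{UJ}_n,t_2,\eta')$, and hence so is its $D$-component $\pi_D(\phi(Y_{1:0}^-))$. But $\phi(Y_{1:0}^-)\equiv e_{\pi(1)\pi(1)}-e_{\pi(n)\pi(n)}\pmod J$, so $\pi_D(\phi(Y_{1:0}^-))=e_{\pi(1)\pi(1)}-e_{\pi(n)\pi(n)}$, which is nonzero because $\pi(1)\ne\pi(n)$. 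Applying the characterization above in the target grading $(\text{UJ}_n,t_2,\eta')$ to this nonzero homogeneous diagonal element of the form $e_{kk}-e_{ll}$ with $k\ne l$, its degree must be $t_2$; therefore $t_1=t_2$, contradicting $t_1\ne t_2$.

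The step I expect to be the main obstacle is the characterization of the homogeneous elements of $D$ in an MT grading, together with the verification that $D$ and $J$ are graded so that $\pi_D$ is a graded map; once these are in place, the rest is routine bookkeeping with the quotient $\text{UJ}_n/J$ and the basic properties of MT gradings established earlier.
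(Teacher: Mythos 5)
Your proof is correct and follows essentially the same route as the paper: the paper also passes to the quotient $\text{UJ}_n/J$ (equivalently, the diagonal part $D$), where the grading induced by $(\text{UJ}_n,t_i,\eta)$ has support $\{1,t_i\}$, so a graded isomorphism would force $t_1=t_2$. Your additional steps (the permutation of the idempotent classes and the explicit description of the homogeneous diagonal elements) merely spell out the ``which is impossible'' that the paper's one-line proof leaves implicit.
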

	\label{afterexample}
	\begin{proof}
		If $\psi\colon (\text{UJ}_n,t_1,\eta)\to(\text{UJ}_n,t_2,\eta')$ is a graded isomorphism then $\bar\psi\colon\text{UJ}_n/J\to\text{UJ}_n/J$ will be a graded isomorphism which is impossible when $t_1\ne t_2$.
	\end{proof}
	\begin{Lemma}\label{notation_bla}
		Let $t\in G$ be an element of order 2 and let $\eta=(g_1,\ldots,g_q)$, $\eta'=(g_1',\ldots,g_q')\in G^q$ where $q=\lceil\frac{n-1}2\rceil$. Assume that one of the following holds:
		\begin{itemize}
			\item there is an $i$, $1\le i\le\lfloor\frac{n-1}2\rfloor$ such that $g_i\not\equiv g_i'\pmod{\langle t\rangle}$, or
			\item $n$ is even and $g_q\ne g_q'$.
		\end{itemize}
		Then $(\text{UJ}_n,t,\eta)\not\simeq(\text{UJ}_n,t,\eta')$.
	\end{Lemma}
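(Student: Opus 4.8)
The plan is to prove the contrapositive: assuming there is a graded isomorphism $\varphi\colon(\text{UJ}_n,t,\eta)\to(\text{UJ}_n,t,\eta')$, I will deduce that $g_i\equiv g_i'\pmod{\langle t\rangle}$ for every $i$ with $1\le i\le\lfloor\frac{n-1}2\rfloor$ and, in addition, that $g_q=g_q'$ when $n$ is even. This contradicts each of the two listed alternatives, so no such $\varphi$ can exist.

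The first step is to normalize $\varphi$. The involution $\psi\colon e_{i:m}\mapsto e_{-i:m}$ fixes every $Y_{i:m}^+$ and negates every $Y_{i:m}^-$, hence preserves degrees in any MT grading; so $\psi$ is a graded automorphism of both $(\text{UJ}_n,t,\eta)$ and $(\text{UJ}_n,t,\eta')$. By the description of $\text{Aut}(\text{UJ}_n)$ recalled above (from \cite{BBC}), $\varphi$ is either an associative automorphism of $UT_n$ or such an automorphism followed by $\psi$; composing on the left with $\psi$ when needed and using that $\psi$ is graded, I may assume $\varphi$ is an associative automorphism of $UT_n$. Since every associative automorphism of $UT_n$ is inner (a classical fact), $\varphi=\theta_u\colon x\mapsto uxu^{-1}$ for some invertible upper triangular $u$, which I write as $u=d(1+N)$ with $d=\text{diag}(d_1,\dots,d_n)$ and $N$ strictly upper triangular.

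Next I pass to the quotient $J/J^2$. In an MT grading $J$ is spanned by the homogeneous elements $Y_{i:m}^\pm$ with $m\ge1$, so $J$ and $J^2=J\circ J$ are graded ideals and $\theta_u$ induces a degree-preserving linear isomorphism $\bar\theta_u\colon J/J^2\to J/J^2$, from the $\eta$-grading to the $\eta'$-grading. Since $Ne_{i,i+1}$ and $e_{i,i+1}N$ lie in $J^2$, one computes $\theta_u(e_{i,i+1})\equiv(d_i/d_{i+1})\,e_{i,i+1}\pmod{J^2}$, whence in $J/J^2$
\[
\bar\theta_u(\bar Y_{i:1}^+)=\frac{d_i}{d_{i+1}}\,\bar e_{i,i+1}+\frac{d_{n-i}}{d_{n-i+1}}\,\bar e_{n-i,n-i+1}\in\text{span}\{\bar Y_{i:1}^+,\bar Y_{i:1}^-\}.
\]
For $1\le i\le\lfloor\frac{n-1}2\rfloor$ the classes $\bar Y_{i:1}^+$ and $\bar Y_{i:1}^-$ are linearly independent; in the $\eta'$-grading they are homogeneous of degrees $g_i'$ and $tg_i'$ respectively, which are distinct by the MT hypothesis, and no other homogeneous element of $J/J^2$ lies in their span (the remaining $\bar Y_{j:1}^\pm$, $j\ne i$, involve matrix units disjoint from $\bar e_{i,i+1},\bar e_{n-i,n-i+1}$). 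Hence the nonzero homogeneous element $\bar\theta_u(\bar Y_{i:1}^+)$, which has degree $g_i$, must lie on $K\bar Y_{i:1}^+$ or on $K\bar Y_{i:1}^-$, so $g_i\in\{g_i',tg_i'\}$, i.e. $g_i\equiv g_i'\pmod{\langle t\rangle}$. If $n$ is even and $i=q=\frac n2$, then $\bar Y_{q:1}^+=2\bar e_{n/2,n/2+1}$ and $\bar Y_{q:1}^-=0$, so the displayed identity forces $\bar\theta_u(\bar Y_{q:1}^+)$ to be a nonzero scalar multiple of $\bar Y_{q:1}^+$ itself, and therefore $g_q=g_q'$. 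This yields the asserted contradiction.

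I expect the part requiring the most care to be the reduction to an inner automorphism, which rests on the classical identification $\text{Aut}(UT_n)=\text{Inn}(UT_n)$ together with the (easy but essential) observation that $\psi$ is a graded automorphism of every MT grading; a secondary subtlety is the separate treatment of the ``middle'' index $q=\frac n2$ for even $n$, where $Y_{q:1}^-$ degenerates to $0$ and the conclusion sharpens from a congruence modulo $\langle t\rangle$ to the equality $g_q=g_q'$.
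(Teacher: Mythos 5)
Your proof is correct, but it follows a genuinely different route from the paper's. The paper treats the two alternatives by two separate reductions: for the congruences modulo $\langle t\rangle$ it pushes the $G$-grading forward along the projection $G\to G/\langle t\rangle$, notes that the coarsened grading is the elementary grading with the palindromic sequence $\eta_0$, and then invokes the classification of elementary gradings (Theorem \ref{class_element}); for the middle entry when $n$ is even it passes to the quotient by the automorphism-invariant graded ideal $T=\text{Span}\{Y_{i:m}^\pm\mid(i,m)\notin\{(q,0),(q,1)\}\}$, which is $\text{UJ}_2$, and uses the known $\text{UJ}_2$ picture. You instead normalize the hypothetical graded isomorphism: combining the result of \cite{BBC} recalled in the paper with the (correct) observation that the flip involution $\psi$ is degree-preserving for every MT grading, and with the classical fact that all associative automorphisms of $UT_n$ over a field are inner, you reduce to conjugation by $u=d(1+N)$ and read the degrees off the induced action on $J/J^2$: each $\bar Y_{i:1}^+$ lands in $\text{span}\{\bar Y_{i:1}^+,\bar Y_{i:1}^-\}$, whose two spanning vectors are homogeneous of the distinct degrees $g_i'$ and $tg_i'$ in the target grading, so homogeneity of the image forces $g_i\in\{g_i',tg_i'\}$, and at the degenerate middle index for even $n$ (where $\bar Y_{q:1}^-=0$) it forces $g_q=g_q'$. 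This handles both bullets uniformly and avoids both the elementary-grading classification and the reduction to $\text{UJ}_2$, at the price of one additional classical input (innerness of $\mathrm{Aut}(UT_n)$) beyond what the paper uses; your identification of $J\circ J$ with $\text{Span}\{e_{kl}\mid l-k\ge2\}$ and the computation of $\theta_u$ modulo $J^2$ are both sound, so there is no gap.
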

	\begin{proof}
		Let $\varphi\colon G\to G_0=G/\langle t\rangle$ be the canonical projection. The induced $G_0$-grading on $\text{UJ}_n$ by $\varphi$ and by  $(\text{UJ}_n,t,\eta)$ coincides with the elementary $G_0$-grading $(\text{UJ}_n,\eta_0)$ where
\[
			\eta_0=\left\{\begin{array}{l}
				(\varphi(g_1),\varphi(g_2),\ldots,\varphi(g_q),\varphi(g_q),\varphi(g_{q-1}),\ldots,\varphi(g_1)),\text{ if $n$ is odd},\\
				(\varphi(g_1),\varphi(g_2),\ldots,\varphi(g_{q-1}),\varphi(g_q),\varphi(g_{q-1}),\ldots,\varphi(g_1)),\text{ if $n$ is even}.
			\end{array}\right.
\]
		A $G$-graded isomorphism $\psi\colon (\text{UJ}_n,t,\eta)\to(\text{UJ}_n,t,\eta')$  induces a $G_0$-graded isomorphism $(\text{UJ}_n,\eta_0)\to(\text{UJ}_n,\eta_0')$ if and only if $\eta_0=\eta_0'$ (since $\eta_0'=\text{rev}\,\eta_0'$), by Theorem \ref{class_element}. This proves the first condition.

		Now, assume $n$ even and $g_q\ne g_q'$. Let $T=\text{Span}\{Y_{i:m}^\pm\mid(i,m)\not\in\{(q,1),(q,0)\}\}$ (note that $T$ is invariant under all automorphisms of $\text{UJ}_n$). Then $T$ is a graded ideal, and $\text{UJ}_n/T\simeq\text{UJ}_2$. But $(\text{UJ}_n/T,t,\eta)\not\simeq(\text{UJ}_n/T,t,\eta')$ if $g_q\ne g_q'$.
	\end{proof}

	\begin{Lemma}
		Let $t\in G$ be of order 2, $\eta=(g_1,\ldots,g_q)$, $\eta'=(g_1,\ldots,g_q')\in G^q$ where $q=\lceil\frac{n-1}2\rceil$. Assume that
		\begin{enumerate}\renewcommand{\labelenumi}{\roman{enumi})}
			\item $g_i\equiv g_i'\pmod{\langle t\rangle}$, for $i=1$, 2, \dots, $p$ where $p=\lfloor\frac{n-1}2\rfloor$,
			\item if $n$ is even then $g_q=g_q'$.
		\end{enumerate}
		Then $(\text{UJ}_n,t,\eta)\simeq(\text{UJ}_n,t,\eta')$.
	\end{Lemma}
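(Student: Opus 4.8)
The plan is to construct the required $G$-graded isomorphism as conjugation by a carefully chosen invertible diagonal matrix, after reducing to the case where $\eta$ and $\eta'$ differ in just one coordinate. Note first that hypotheses (i)--(ii) say precisely that $g_i' \in \{g_i, g_i t\}$ for $1 \le i \le p$, and $g_q' = g_q$ when $n$ is even. Put $P = \{\, i : 1 \le i \le p,\ g_i' = g_i t\,\}$ and argue by induction on $|P|$. When $P = \emptyset$ we have $\eta = \eta'$ and there is nothing to do. Otherwise choose $i_0 \in P$ and let $\eta''$ be the sequence obtained from $\eta$ by replacing its $i_0$-th entry $g_{i_0}$ with $g_{i_0}t$. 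Since $t$ has order $2$, $(\text{UJ}_n, t, \eta'')$ is again an MT grading, the pair $(\eta'', \eta')$ still satisfies (i)--(ii), and the analogous set for that pair is $P \setminus \{i_0\}$; so by the inductive hypothesis $(\text{UJ}_n, t, \eta'') \simeq (\text{UJ}_n, t, \eta')$, and the whole problem reduces to proving $(\text{UJ}_n, t, \eta) \simeq (\text{UJ}_n, t, \eta'')$.

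For this one-coordinate case, let $D = \mathrm{diag}(d_1, \dots, d_n) \in UT_n$ be the invertible diagonal matrix determined by $d_j/d_{j+1} = -1$ for $j = n - i_0$ and $d_j/d_{j+1} = 1$ otherwise (here $i_0 \le p < n/2$, so $i_0 \ne n - i_0$), and set $\psi(x) = DxD^{-1}$; this is an associative, hence Jordan, automorphism of $\text{UJ}_n$. On a matrix unit $\psi(e_{k:m}) = (d_k/d_{k+m})\, e_{k:m} = \big(\prod_{l=k}^{k+m-1} d_l/d_{l+1}\big)\, e_{k:m}$, so $\psi$ multiplies $e_{k:m}$ by $-1$ exactly when $n - i_0 \in \{k, \dots, k+m-1\}$, and (since $e_{-k:m}$ occupies the mirror positions indexed by $\{n-k-m+1, \dots, n-k\} = n - \{k,\dots,k+m-1\}$) multiplies $e_{-k:m}$ by $-1$ exactly when $i_0 \in \{k,\dots,k+m-1\}$. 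Hence $\psi$ fixes the diagonal elements $Y_{k:0}^\pm$, it sends $Y_{k:m}^\pm$ to $\pm Y_{k:m}^\pm$ when the index interval $\{k, \dots, k+m-1\}$ contains both or neither of $i_0, n-i_0$, and it sends $Y_{k:m}^\pm$ to $\pm Y_{k:m}^\mp$ when that interval contains exactly one of them.

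It then remains to check that $\psi$ carries the $\eta$-grading onto the $\eta''$-grading. By Lemma~\ref{multi_table}, $\deg Y_{k:m}^+ = \prod_{l=k}^{k+m-1}\deg Y_{l:1}^+$ in each grading, and from $Y_{k:1}^-\circ Y_{k+1:1}^+\circ\cdots\circ Y_{k+m-1:1}^+ = (\text{scalar})\, Y_{k:m}^-$ together with $\deg Y_{k:1}^- = t\,\deg Y_{k:1}^+$ one gets $\deg Y_{k:m}^- = t\,\deg Y_{k:m}^+$. Now $\deg_{\eta''} Y_{l:1}^+$ differs from $\deg_\eta Y_{l:1}^+$, by the factor $t$, exactly for $l \in \{i_0, n-i_0\}$ (recall $\deg Y_{l:1}^+ = \deg Y_{n-l:1}^+$); hence $\deg_{\eta''}Y_{k:m}^+ = t^{c}\deg_\eta Y_{k:m}^+$, where $c = \big|\{k,\dots,k+m-1\} \cap \{i_0, n-i_0\}\big|$. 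Using $t^2 = 1$, a short case analysis on the parity of $c$ matches this against the description of $\psi$ above: when $c$ is even, $\psi(Y_{k:m}^\pm) = \pm Y_{k:m}^\pm$ and the degree is preserved; when $c = 1$, $\psi$ interchanges $Y_{k:m}^+$ and $Y_{k:m}^-$, and the identities $\deg_{\eta''}Y_{k:m}^+ = t\deg_\eta Y_{k:m}^+$ and $\deg Y_{k:m}^- = t\deg Y_{k:m}^+$ show their degrees are interchanged correspondingly. Thus $\psi$ maps the homogeneous component of each degree $g$ of $(\text{UJ}_n,t,\eta)$ into that of $(\text{UJ}_n,t,\eta'')$; being bijective on the finite-dimensional $\text{UJ}_n$, which is spanned by the $Y_{k:m}^\pm$, it is a $G$-graded isomorphism. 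The degenerate positions where a central $Y_{k:m}^-$ vanishes ($n-m$ odd) cause no trouble: there $\{k,\dots,k+m-1\}$ is symmetric under $l \mapsto n-l$, so $c \in \{0,2\}$ and $\psi$ simply rescales the surviving $Y_{k:m}^+$.

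The only step that genuinely needs care is this last one: matching the sign $\psi$ picks up on each of $e_{k:m}$ and $e_{-k:m}$ with the parity $c$ of the number of twisted coordinates lying inside the interval $\{k,\dots,k+m-1\}$, and checking that the degenerate middle blocks do not spoil the argument. The reduction to a single twisted entry, the construction of $D$, and the auxiliary degree formulas are all routine.
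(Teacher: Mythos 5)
Your proof is correct and is essentially the paper's argument: both realize the graded isomorphism as conjugation by a diagonal matrix with entries $\pm1$ chosen so that the sign flips occur at the mirror positions of the twisted coordinates. The paper simply writes the composite matrix $A=\mathrm{diag}(\epsilon,\ldots,\epsilon,\epsilon_{p-1}\cdots\epsilon_1,\ldots,\epsilon_1,1)$ in one step, which is exactly the product of your one-coordinate matrices $D$, while you organize the same construction as an induction on the number of differing entries and verify the degree bookkeeping in more detail.
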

	\begin{proof}
		For every $i=1$, 2, \dots, $p$, let $\epsilon_i=1$ if $g_i=g_i'$ and $\epsilon_i=-1$ if $g_i\ne g_i'$. Let $\epsilon=\epsilon_1\epsilon_2\cdots\epsilon_p$ and
$A=\text{diag}(\epsilon,\epsilon,\ldots,\epsilon, \epsilon_{p-1} \epsilon_{p-2}\cdots \epsilon_1, \epsilon_{p-2} \cdots\epsilon_1, \ldots,\epsilon_2 \epsilon_1,\epsilon_1,1)$. The map $(\text{UJ}_n,t,\eta)\to (\text{UJ}_n,t,\eta')$ given by $x\mapsto AxA^{-1}$ is a graded isomorphism.
	\end{proof}

The following theorem classifies the MT gradings on $\text{UJ}_n$.
	\begin{Thm}
		Every MT grading has commutative support. If $G$ is abelian, then there is  1--1 correspondence between the  non-isomorphic MT gradings on $\text{UJ}_n$ and the set $M$ where
		\begin{enumerate}
			\item if $n$ is odd, $M=\{(t,\eta)\mid t\in G,o(t)=2,\eta\in(G/\langle t\rangle)^{\frac{n-1}2}\}$,
			\item if $n$ is even, $M=\{(t,\eta)\mid t\in G,o(t)=2,\eta\in(G/\langle t\rangle)^{\frac{n-2}2}\times G\}$.
		\end{enumerate}
	\end{Thm}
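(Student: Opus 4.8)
The first assertion is already in hand: part~(iii) of the structural lemma above shows that the support of any MT grading on $\text{UJ}_n$ is commutative, with no hypothesis on $G$. So from now on I assume $G$ abelian and concentrate on the parametrization.

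The plan is to exhibit a surjection from the set of pairs $(t,\eta)$, with $t\in G$ of order $2$ and $\eta=(g_1,\dots,g_q)\in G^q$ where $q=\lceil\frac{n-1}2\rceil$, onto the collection of MT gradings, and then to determine exactly when two such pairs yield $G$-graded isomorphic gradings. By parts~(i)--(ii) of the structural lemma, $\deg Y_{1:0}^-$ is an element of order $2$ and the pair $(t,\eta)=(\deg Y_{1:0}^-,(\deg Y_{1:1}^+,\dots,\deg Y_{q:1}^+))$ completely determines the grading; hence $(t,\eta)\mapsto(\text{UJ}_n,t,\eta)$ is well defined and onto the class of MT gradings, and everything reduces to computing its fibres up to graded isomorphism.

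The next step is to assemble the three isomorphism lemmas just proved. The first shows $(\text{UJ}_n,t_1,\eta)\simeq(\text{UJ}_n,t_2,\eta')$ forces $t_1=t_2$. Fixing $t:=t_1=t_2$, Lemma~\ref{notation_bla} shows the gradings are non-isomorphic whenever $g_i\not\equiv g_i'\pmod{\langle t\rangle}$ for some $i\le p:=\lfloor\frac{n-1}2\rfloor$, or $n$ is even and $g_q\ne g_q'$; and the last lemma produces an explicit diagonal conjugation $x\mapsto AxA^{-1}$ realizing a graded isomorphism when none of these obstructions is present. Combining the three, $(\text{UJ}_n,t_1,\eta)\simeq(\text{UJ}_n,t_2,\eta')$ if and only if $t_1=t_2=:t$, $g_i\equiv g_i'\pmod{\langle t\rangle}$ for $i=1,\dots,p$, and in addition $g_q=g_q'$ when $n$ is even.

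Finally I would match this with $M$ by splitting on the parity of $n$. If $n$ is odd then $p=q=\frac{n-1}2$, so a grading is determined up to isomorphism by $t$ and the image of $\eta$ in $(G/\langle t\rangle)^{(n-1)/2}$, which is item~(1). If $n$ is even then $q=\frac n2$ while $p=q-1=\frac{n-2}2$, so the invariant is $t$, the image of $(g_1,\dots,g_{q-1})$ in $(G/\langle t\rangle)^{(n-2)/2}$, and the element $g_q\in G$ itself, which is item~(2). In either case $(t,\eta)\mapsto(\text{UJ}_n,t,\eta)$ descends to a bijection from $M$ onto the set of isomorphism classes of MT gradings. The main point requiring care is precisely this index bookkeeping: one has to check, separately for each parity, that the ``bad'' conditions of Lemma~\ref{notation_bla} and the ``good'' conditions of the last lemma are genuinely complementary, so that the fibres of the map are exactly the congruence classes describing $M$. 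Once that is verified the argument is purely formal; I do not expect any other obstacle.
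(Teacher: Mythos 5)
Your proposal is correct and follows essentially the same route as the paper: the theorem there is stated as the direct assembly of the structural lemma (commutative support, determination of the grading by $(t,\eta)$) with the three isomorphism lemmas, which is precisely what you do, including the parity bookkeeping identifying $p=\lfloor\frac{n-1}2\rfloor$ and $q=\lceil\frac{n-1}2\rceil$ with the components of $M$. No substantive difference to report.
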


\section{General gradings on $\text{UJ}_n$}
	Let $G$ be any group and fix a $G$-grading on $\text{UJ}_n$. The ideal $J$ of all strictly upper triangular matrices is homogeneous since $J=(\text{UJ}_n,\text{UJ}_n,\text{UJ}_n)$. Also the element $e_{1n}$ is always homogeneous since $\text{Span}\{e_{1n}\}=J^{n-1}$. 

	As a consequence $B=\text{Ann}_{\text{UJ}_n}\{e_{1n}\}=\{x\in\text{UJ}_n\mid (x)_{(1,1)}+(x)_{(n,n)}=0\}$ is homogeneous, and $B^2=B\circ B=\{x\in\text{UJ}_n\mid(x)_{(1,1)}=(x)_{(n,n)}\}$ is as well. It follows $C=B\cap B^2=\{x\in\text{UJ}_n\mid(x)_{(1,1)}=(x)_{(n,n)}=0\}$ is homogeneous. Let $U_1=\text{Ann}_{\text{UJ}_n}(C/J)$ and let $T_1=U_1^{\circ n}=\{x\in\text{UJ}_n\mid (x)_{(i,j)}=0,\text{ for $i\ne1$ or $(i,j)\ne (i,n)$}\}$, the $n$-th power of $U_1$. It is easy to see that $T_1$ is an ideal (moreover, a graded ideal). A similar trick in the associative case can be found in the proof of Lemma 2 of  \cite{VaZa2003}.

	\begin{Lemma}\label{prevlemma}
		There exists a homogeneous element $e_2\in T_1$ such that $(\deg e_2)^2=1$ and $e_2\equiv e_{11}-e_{nn}\pmod{T_1\cap J}$.
	\end{Lemma}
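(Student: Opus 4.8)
The plan is to produce $e_2$ by starting from a naive representative and correcting it within the graded ideal $T_1$. First I would note that $\mathrm{UJ}_n = K1 \oplus J$ as a graded vector space decomposition is not quite what we want; instead, consider the element $h = e_{11} - e_{nn}$. Its image $\bar h$ in $\mathrm{UJ}_n/(T_1\cap J)$ should turn out to be homogeneous once we understand the grading on the small quotient. Concretely, $\mathrm{UJ}_n/T_1$ is spanned by the images of the diagonal idempotents and the strictly-upper entries other than position $(1,n)$; but a cleaner route is: $T_1$ contains $e_{1n}$ and, by its explicit description, $T_1 = Ke_{1n}$, so $T_1 \cap J = T_1 = Ke_{1n}$ and $T_1/(T_1\cap J) = 0$. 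Hence the congruence to be achieved is $e_2 \equiv e_{11}-e_{nn} \pmod{Ke_{1n}}$, i.e. $e_2 = e_{11} - e_{nn} + \lambda e_{1n}$ for some scalar $\lambda$, and we must choose $\lambda$ so that $e_2$ is $G$-homogeneous and $(\deg e_2)^2 = 1$.

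The key computation is that $h=e_{11}-e_{nn}$ satisfies $h\circ h = 2(e_{11}+e_{nn})$ and $h \circ e_{1n} = 0$ (since $e_{11}e_{1n} = e_{1n}$, $e_{1n}e_{11}=0$, $e_{nn}e_{1n}=0$, $e_{1n}e_{nn}=e_{1n}$, so $h\circ e_{1n} = e_{1n} - e_{1n} = 0$), while $h\circ h - 2\cdot 1 = -4 e_{nn}\cdot$(something)... more usefully, consider the idempotent-like behavior. The point is that $B^2 = \{x : (x)_{(1,1)}=(x)_{(n,n)}\}$ is homogeneous of dimension $\dim \mathrm{UJ}_n - 1$, and $B$ is homogeneous; their interplay forces the one-dimensional "difference direction" to be spanned by a homogeneous element modulo $J$. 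Precisely, I would decompose $h$ into its homogeneous components $h = \sum_g h_g$ with respect to the fixed $G$-grading. Since the subspace $\mathrm{UJ}_n = B^2 \oplus Kh$ (as $B^2$ has codimension one and $h\notin B^2$) and $B^2$ is graded, each homogeneous component $h_g$ lies in $Kh \oplus B^2$; writing $h_g = \mu_g h + b_g$ with $b_g \in B^2$, and summing, $\sum_g \mu_g = 1$ and $\sum_g b_g = 0$. Because $B^2$ is graded, $b_g$ is the degree-$g$ component of $0$, hence... this needs the stronger observation that $h_g - \mu_g h \in B^2$ is homogeneous, so actually all but finitely many are zero and we get a genuine decomposition; the subtlety is extracting a single homogeneous $e_2$ rather than a sum.

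The cleanest argument: the quotient $\mathrm{UJ}_n / C$ where $C = \{x : (x)_{(1,1)}=(x)_{(n,n)}=0\}$ is graded and two-dimensional (coordinates $(x)_{(1,1)}$ and $(x)_{(n,n)}$), with induced Jordan product making it $\cong K\times K$, the direct product of two copies of the ground field (with zero or doubled multiplication — in any case a commutative associative algebra). A graded decomposition of $K \times K$ into homogeneous components: the identity $\bar 1 = \overline{e_{11}+e_{nn}}$ is homogeneous of degree $1$. The element $\bar h = \overline{e_{11}-e_{nn}}$ satisfies $\bar h^2 = \overline{2\cdot 1}$ if we use the Jordan (doubled) product, so $\tfrac12\bar h$ is an involution-type element; in the two-dimensional graded algebra $K\bar 1 \oplus K\bar h$, the element $\bar h$ must be homogeneous (a two-dimensional graded unital algebra over a field of characteristic $\ne 2$ with $\bar h^2 = 2\bar 1$ has $\bar h$ homogeneous, its degree $t$ satisfying $t^2 = 1$). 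Pull this back: let $\deg \bar h = t$ with $t^2 = 1$, pick the degree-$t$ homogeneous component $e_2$ of any lift of $\bar h$ to $T_1$... but $T_1$ maps to $0$ in $\mathrm{UJ}_n/C$, so that does not lift. Instead: take $e_2$ to be the degree-$t$ homogeneous component of $h = e_{11}-e_{nn}$ in $\mathrm{UJ}_n$ itself; then $e_2 - h \in \ker(\mathrm{UJ}_n \to \mathrm{UJ}_n/C) = C$, and moreover $e_2 - h$ has zero $(1,1)$ and $(n,n)$ entries, and one checks using homogeneity of $B$, $B^2$ and the ideal structure that $e_2 - h \in T_1\cap J$ and $(\deg e_2)^2 = (\deg \bar h)^2 = 1$. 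The main obstacle I anticipate is justifying that the homogeneous component $e_2$ of $h$ lands inside the specific ideal $T_1$ modulo $J$ rather than merely inside $C$ — i.e. controlling the correction term's position to be exactly in $T_1\cap J = Ke_{1n}$ (or more precisely that $e_2 \equiv h \pmod{T_1\cap J}$, not just mod $J$); this requires carefully exploiting that $T_1 = U_1^{\circ n}$ is defined as an annihilator-power and that $h$ and $e_2$ act identically on $C/J$, which I would verify by a short direct computation of $h\circ c$ for $c\in C$ and comparing homogeneous components.
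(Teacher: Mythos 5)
Your two-dimensional quotient computation is fine and is in substance the paper's own argument (the paper works in $A=T_1/(T_1\cap J)$, which is canonically the same two-dimensional graded algebra as $\text{UJ}_n/C$, shows its unit has degree $1$, and deduces that $\bar e_{11}-\bar e_{nn}$ is homogeneous of some degree $t$ with $t^2=1$). But two things go wrong. First, you misidentify $T_1$: reading it as $Ke_{1n}$ makes the lemma itself impossible, since then $e_2\in T_1\subseteq J$ could never be congruent to $e_{11}-e_{nn}$ modulo $T_1\cap J=T_1$. The intended $T_1$ is the graded ideal of matrices supported on the first row and last column (it contains $e_{11}$ and $e_{nn}$, e.g. $e_{11}^{\circ n}=2^{n-1}e_{11}\in U_1^{\circ n}$), so that $T_1/(T_1\cap J)$ is two-dimensional with unit $\bar e_{11}+\bar e_{nn}$ — exactly what the statement and the next lemma require. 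With your reading, your reformulated goal ``find a homogeneous $e_2=e_{11}-e_{nn}+\lambda e_{1n}$'' is strictly stronger than the lemma and is false in general: for the grading on $\text{UJ}_3$ obtained from the elementary $\mathbb{Z}_2$-grading with $\deg e_{12}=\deg e_{23}=1$ by conjugation with $1+e_{12}$, the homogeneous elements of trivial degree are spanned by $e_{11}-e_{12}$, $e_{22}+e_{12}$, $e_{33}$, $e_{13}$, so no homogeneous element of the form $e_{11}-e_{33}+\lambda e_{13}$ exists, whereas $e_{11}-e_{33}-e_{12}$ is homogeneous and witnesses the actual lemma.

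Second, and decisively, your final construction ($e_2:=h_t$, the degree-$t$ component of $h=e_{11}-e_{nn}$) only yields $e_2\equiv h\pmod C$, and you explicitly leave the passage from $C$ to $T_1\cap J$, together with the membership $e_2\in T_1$, as an ``anticipated obstacle'' with a vague gesture at computing $h\circ c$. That step is precisely the content of the lemma, so as written the proof is incomplete. The missing argument is short once $T_1$ is identified correctly: $h\in T_1$ and $T_1$ is a graded ideal, so every homogeneous component $h_g$ of $h$ lies in $T_1$; for $g\ne t$ the image of $h_g$ in $\text{UJ}_n/C$ vanishes, hence $h_g\in T_1\cap C=T_1\cap J$ (because $T_1\subseteq Ke_{11}+Ke_{nn}+J$), and therefore $e_2=h_t\in T_1$ is homogeneous with $(\deg e_2)^2=t^2=1$ and $e_2\equiv e_{11}-e_{nn}\pmod{T_1\cap J}$. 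The paper sidesteps the issue entirely by arguing inside $T_1/(T_1\cap J)$ from the start and lifting the homogeneous element $\bar e_{11}-\bar e_{nn}$ through the graded subspace $T_1$; either repair works, but without one of them your proposal does not prove the statement.
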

	\begin{proof}
Note first that $A=T_1/T_1\cap J$ is an associative graded algebra whose unit is $\bar e_1=\bar e_{11}+\bar e_{nn}$. Hence $\bar e_1$ is graded and $\deg\bar e_1=1$. Moreover, we can choose the homogeneous element $x\in T_1$ and we can assume that $\bar x$ and $\bar e_1$ are linearly independent in $A$. If $\deg\bar x=1$ then we are done. Otherwise $\deg\bar x\ne\deg (\bar x\circ\bar x)$ which implies  $\bar x\circ\bar x$ is a multiple of $\bar e_1$, and this proves the lemma.
	\end{proof}

	\begin{Lemma}\label{lemmabefore}
		Up to a graded isomorphism, $e_1=e_{11}+e_{nn}$ and $e_2=e_{11}-e_{nn}$ are homogeneous and $\deg e_1=(\deg e_2)^2=1$.
	\end{Lemma}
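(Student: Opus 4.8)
The plan is: use Lemma~\ref{prevlemma} to obtain a homogeneous representative of $e_{11}-e_{nn}$ modulo $T_1\cap J$, then remove the error term by an inner automorphism of $UT_n$ and transport the grading along it, and finally recover $e_{11}+e_{nn}$ as one half of the Jordan square of the corrected element.

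More precisely, by Lemma~\ref{prevlemma} fix a homogeneous $\tilde e_2\in T_1$ with $(\deg\tilde e_2)^2=1$ and $\tilde e_2=(e_{11}-e_{nn})+j$, where $j\in T_1\cap J$; write $y=e_{11}-e_{nn}$ and $h=\deg\tilde e_2$. The key point is that $j$ can be absorbed by a conjugation. One checks that $\operatorname{ad}_y$ maps $T_1\cap J$ into itself, acting as the identity on $\text{Span}\{e_{1k},e_{in}\mid 2\le k,i\le n-1\}$ and as multiplication by $2$ on $Ke_{1n}$; since $\operatorname{char} K\ne 2$ it is therefore an invertible operator on $T_1\cap J$. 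A short computation also gives $[s,[s,y]]=0$ for every $s\in T_1\cap J$, so that $\exp(s)\,y\,\exp(-s)=y+[s,y]$ with no higher-order terms. Choosing $s\in T_1\cap J$ with $[s,y]=j$ (possible by the invertibility of $\operatorname{ad}_y$ on $T_1\cap J$) and setting $u=\exp(-s)$, which is an invertible upper triangular matrix since $s$ is nilpotent and $\operatorname{char} K\ne 2$, we obtain $u\,\tilde e_2\,u^{-1}=y=e_{11}-e_{nn}$.

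Conjugation by $u$ is an automorphism $\varphi$ of the associative algebra $UT_n$, hence of the Jordan algebra $\text{UJ}_n$. Pushing the given $G$-grading forward along $\varphi$ yields an isomorphic $G$-grading of $\text{UJ}_n$ in which $e_{11}-e_{nn}=\varphi(\tilde e_2)$ is homogeneous of degree $h$, and $h^2=1$. In this grading $e_{11}+e_{nn}=\tfrac12\,(e_{11}-e_{nn})\circ(e_{11}-e_{nn})$ is then homogeneous of degree $h^2=1$. Replacing the original grading by this isomorphic one, we get that $e_1=e_{11}+e_{nn}$ and $e_2=e_{11}-e_{nn}$ are homogeneous with $\deg e_1=(\deg e_2)^2=1$.

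The only step that is not pure formality is the straightening of $\tilde e_2$, and I expect the main things to verify there to be exactly the two facts used above — invertibility of $\operatorname{ad}_{e_{11}-e_{nn}}$ on $T_1\cap J$ and the vanishing of the second commutator, so that the exponential series collapses after one term — together with the routine but essential observation that the correction is realized by a genuine \emph{algebra} automorphism, which is what makes it legitimate to pass to an isomorphic grading.
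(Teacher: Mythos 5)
Your argument is correct, and it reaches the lemma by a genuinely different mechanism than the paper's, although the overall strategy (take the element from Lemma~\ref{prevlemma}, straighten it by an inner automorphism of $UT_n$, and transport the grading along that automorphism) is the same. The paper first sets $e_1=\frac12 e_2\circ e_2$, verifies entry by entry that $e_1(e_1-1)=0$ in the associative product, so that $e_1$ has minimal polynomial $z(z-1)$, is diagonalizable, and can be conjugated to $e_{11}+e_{nn}$; it then forms $r_2=e_2\circ e_1-e_2=e_{11}-e_{nn}+\alpha e_{1n}$, which is diagonalizable and commutes with $e_1$, and finishes by simultaneous diagonalization. You instead fix $e_2$ in one stroke: $\operatorname{ad}_{e_{11}-e_{nn}}$ acts on $T_1\cap J$ with eigenvalues $1$ and $2$, hence is invertible since $\operatorname{char}K\ne2$, so the equation $[s,e_{11}-e_{nn}]=j$ is solvable in $T_1\cap J$; the identity $[s,[s,e_{11}-e_{nn}]]=0$ makes the conjugation formula terminate, so conjugation by the unipotent $\exp(-s)$ carries $\tilde e_2$ exactly to $e_{11}-e_{nn}$, and $e_1$ then comes for free as $\frac12 e_2\circ e_2$, homogeneous of degree $(\deg e_2)^2=1$. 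Your route buys an explicit conjugating matrix and avoids both the minimal-polynomial computation and the simultaneous-diagonalization step; the paper's route buys freedom from exponentials and hence from any characteristic bookkeeping. On that last point, be a bit more precise: ``$s$ nilpotent and $\operatorname{char}K\ne2$'' is not by itself enough to define $\exp(-s)$ in small odd characteristic (one would need higher factorials invertible for longer nilpotents); what saves you is that $s\in T_1\cap J$ forces $s^2\in Ke_{1n}$ and $s^3=0$, so $\exp(-s)=1-s+\frac12 s^2$ needs only $2$ to be invertible, and the same observation justifies the truncated formula $\exp(s)\,y\,\exp(-s)=y+[s,y]$ by direct expansion rather than by a characteristic-zero Baker--Campbell--Hausdorff argument.
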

	\begin{proof}
		Let $e_2$ be as in the previous lemma, and let $e_1=\frac12e_2\circ e_2$. Note that
		\begin{enumerate}
			\renewcommand{\labelenumi}{(\alph{enumi})}
			\item $e_1\equiv e_{11}+e_{nn}\pmod{T_1\cap J}$.
			\item $(e_1)_{(1,i)}=(e_2)_{(1,i)}$ and $(e_1)_{(i,n)}=-(e_2)_{(i,n)}$, for $i=2$, 3, \dots, $n-1$.
			\item $(e_1)_{(1,n)}=\sum_{i=2}^{n-1}(e_2)_{(1,i)}(e_2)_{(i,n)}$.
		\end{enumerate}
		As a consequence of the above properties, the associative product $x=e_1(e_1-1)=0$. Indeed,
		\begin{enumerate}
			\renewcommand{\labelenumi}{(\alph{enumi})}
			\item $(x)_{(1,i)}=(e_1)_{(1,1)}(e_1-1)_{(1,i)}+(e_1)_{(1,i)}(e_1-1)_{(i,i)}=0$, for every $i=1$, 2, \dots, $n-1$.
			\item $(x)_{(i,n)}=0$, for $i=2,3,\ldots,n-1$.
			\item Using the above relations one obtains 
				\begin{align*}(x)_{(1,n)}&=\sum_{i=1}^n(e_1)_{(1,i)}(e_1-1)_{(i,n)}\\
							&=\underbrace{(e_1)_{(1,1)}(e_1-1)_{(1,n)}}_{(e_1)_{(1,n)}}+\sum_{i=2}^{n-1}(e_1)_{(1,i)}\underbrace{(e_1-1)_{(i,n)}}_{(e_1)_{(i,n)}}=0.
				\end{align*}
			\item All remaining entries are evidently zero.
		\end{enumerate}
		These equalities show that the minimal polynomial of $e_1$ is $z(z-1)$, hence $e_1$ is diagonalizable. If $\psi\colon\text{UJ}_n\to\text{UJ}_n$ is the conjugation such that $\psi(e_1)=e_{11}+e_{nn}$, then $\psi$ induces a new $G$-grading on $\text{UJ}_n$, isomorphic to the original one, such that $e_1=e_{11}+e_{nn}$ is homogeneous of degree $1$.

Consider again the  element $e_2$ from Lemma \ref{prevlemma}. Let $r_2=e_2\circ e_1-e_2$. Then $r_2= e_{11}-e_{nn}+\alpha e_{1n}$ for some $\alpha\in F$, and moreover, $r_2$ is diagonalizable. Since $e_1$ and $r_2$ commute, they are simultaneously diagonalizable, and we can find an inner automorphism $\psi'$ such that $\psi'(e_1)=e_{11}+e_{nn}$ and $\psi'(r_2)=e_{11}-e_{nn}$. This concludes the lemma.
	\end{proof}

If $(A,\circ)$ is a Jordan algebra, not necessarily with unit and $x\in A$, we denote $(1-x)\circ A=\{y-x\circ y\mid y\in A\}$. Suppose the element $e_1=e_{11}+e_{nn}\in\text{UJ}_n$ is homogeneous. Then the following set is also homogeneous:
\[
\Delta=(1-e_1)\circ\left((1-\frac12e_1)\circ\text{UJ}_n\right)=\text{UJ}_{n-2}.
\]

Thus we write $\text{UJ}_n=T_1\oplus\Delta$. Note that every inner automorphism (conjugation) of $\Delta$ by a matrix $M$ can be extended to an inner automorphism of $\text{UJ}_n$ by the matrix
\[
		M'=\begin{pmatrix}
			1&0&0\\
			0&M&0\\
			0&0&1
		\end{pmatrix}.
\]
Therefore we can repeat the argument above for $\Delta$. Thus we suppose that, up to a graded isomorphism, the elements $u_1=e_{22}+e_{n-1,n-1}$ and $u_2=e_{22}-e_{n-1,n-1}$ are homogeneous and $\deg u_1=(\deg u_2)^2=1$. Since $M'e_i(M')^{-1}=e_i$ for $i=1$, 2, we can also assume  the existence of the elements $e_1$ and $e_2$ as in Lemma~\ref{lemmabefore}.

Take a homogeneous element $z_1\in J^{n-2}=\text{Span}\{e_{1,n-1},e_{2n},e_{1n}\}$ such that $(z)_{(1,n-1)}\ne 0$. We can change $z$ to $z\circ u_1$, if necessary, in order to obtain $(z)_{(1,n)}=0$. In this case $u_2\circ z=-e_2\circ z$, hence we have
	\begin{Lemma}\label{lemmaanterior}
		In the notation introduced above, $\deg e_2=\deg u_2$.
	\end{Lemma}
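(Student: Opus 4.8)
The plan is to exploit the explicit homogeneous element $z_1\in J^{n-2}=\mathrm{Span}\{e_{1,n-1},e_{2n},e_{1n}\}$ constructed just before the statement, and to relate the two commuting diagonal elements $e_2$ and $u_2$ through their action on $z_1$. Recall that $e_2=e_{11}-e_{nn}$ and $u_2=e_{22}-e_{n-1,n-1}$ (up to the graded isomorphism already arranged), and that $z_1$ has been normalized so that $(z_1)_{(1,n-1)}\neq 0$ while $(z_1)_{(1,n)}=0$; after possibly rescaling we may also arrange $(z_1)_{(2,n)}$ to be whatever the computation produces, but the key coordinate is the $(1,n-1)$ entry. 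First I would compute $e_2\circ z_1$: since $e_2=e_{11}-e_{nn}$, for a matrix supported on rows $1,2$ and columns $n-1,n$ one gets $e_2\circ z_1 = (z_1)_{(1,n-1)}e_{1,n-1}+(z_1)_{(1,n)}e_{1n}-(z_1)_{(2,n)}e_{2n}-2(z_1)_{(1,n)}e_{1n}$; with $(z_1)_{(1,n)}=0$ this simplifies to $e_2\circ z_1=(z_1)_{(1,n-1)}e_{1,n-1}-(z_1)_{(2,n)}e_{2n}$. Similarly $u_1\circ z_1$ and $u_2\circ z_1$ are computed from $u_1=e_{22}+e_{n-1,n-1}$, $u_2=e_{22}-e_{n-1,n-1}$: here $u_1$ acts as the identity on the $e_{1,n-1},e_{2n}$ components and $u_2\circ z_1=-(z_1)_{(1,n-1)}e_{1,n-1}+(z_1)_{(2,n)}e_{2n}$. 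The displayed remark in the excerpt that $u_2\circ z_1=-e_2\circ z_1$ is exactly the comparison of these two expressions once the normalization $(z_1)_{(1,n)}=0$ is in place (the $e_{2n}$ parts match because $-(-(z_1)_{(2,n)})=(z_1)_{(2,n)}$, and the $e_{1,n-1}$ parts match because $-(z_1)_{(1,n-1)}=-(z_1)_{(1,n-1)}$).

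From $u_2\circ z_1=-e_2\circ z_1$ and the fact that $z_1$ is homogeneous, the degree argument is then immediate: the left-hand side is homogeneous of degree $(\deg u_2)(\deg z_1)$ and the right-hand side is homogeneous of degree $(\deg e_2)(\deg z_1)$, provided the common value $e_2\circ z_1$ is nonzero. So the next step is to check $e_2\circ z_1\neq 0$, which holds because its $(1,n-1)$ entry is $(z_1)_{(1,n-1)}\neq 0$ by the choice of $z_1$. Hence $(\deg u_2)(\deg z_1)=(\deg e_2)(\deg z_1)$ in the abelian group $G$ (we are in the section where $G$ is not yet assumed abelian, but in fact the support has been shown commutative for the relevant subalgebras, and $\deg z_1$ is invertible in $G$), and cancelling $\deg z_1$ gives $\deg u_2=\deg e_2$.

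The main obstacle, and the only place requiring care, is the bookkeeping that guarantees $z_1$ can simultaneously be chosen homogeneous, satisfy $(z_1)_{(1,n-1)}\neq 0$, and satisfy $(z_1)_{(1,n)}=0$. Homogeneity of a suitable element of $J^{n-2}$ follows because $J^{n-2}$ is a graded subspace (it is a power of the graded ideal $J$) of dimension $3$; some homogeneous element must have nonzero $(1,n-1)$ entry because otherwise the whole graded space $J^{n-2}$ would be contained in $\mathrm{Span}\{e_{2n},e_{1n}\}$, contradicting $\dim J^{n-2}=3$ (for $n\geq 3$; the case $n=2$ being trivial and the case $n=3$ needing the separate observation that $e_{1,n-1}=e_{12}$ and $e_{2n}=e_{2n}$ coincide in meaning only when indices collide, which one checks directly). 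Then $z_1\circ u_1$ has the same $(1,n-1)$ entry (up to the scalar coming from $u_1$ acting as $+1$ there) but a controllable $(1,n)$ entry, so replacing $z_1$ by an appropriate linear combination $z_1-c\,(z_1\circ u_1\ \text{or}\ z_1\circ e_1)$ kills the $(1,n)$ coordinate while preserving homogeneity and the nonvanishing of $(z_1)_{(1,n-1)}$. Once this normalization is justified, the rest is the short computation sketched above.
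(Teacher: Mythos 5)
Your proposal is correct and follows essentially the same route as the paper: choose a homogeneous $z\in J^{n-2}$ with nonzero $(1,n-1)$ entry, normalize via the degree-one element $u_1$ (or $e_1$), observe $u_2\circ z=-e_2\circ z\neq 0$, and cancel $\deg z$ to conclude $\deg e_2=\deg u_2$. (Minor remark: since $e_2\circ e_{1n}=u_2\circ e_{1n}=0$, the identity $u_2\circ z=-e_2\circ z$ holds even without killing the $(1,n)$ entry, so your normalization bookkeeping, including the slightly off general formula for $e_2\circ z_1$, is harmless.)
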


If $\deg e_2=1$ then $e_{11}$ and $e_{nn}$ are homogeneous of degree 1.
	\begin{Lemma}
		If $\deg e_2=1$ then, up to a graded isomorphism, the grading is elementary.
	\end{Lemma}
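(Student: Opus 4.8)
\emph{Proof idea.} The plan is to show, in two steps, first that the hypothesis $\deg e_2=1$ forces every diagonal matrix unit $e_{ii}$ to be homogeneous of degree $1$, and then that this alone already makes the grading elementary. The first step is an induction on $n$ using the descent to the inner block $\Delta=\text{UJ}_{n-2}$; the second is a purely linear argument with simultaneous eigenspaces of multiplication operators.

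\emph{Step 1: all $e_{ii}$ are homogeneous of degree $1$.} I would induct on $n$. When $n\le2$ there is no proper inner block, and $e_{11}=\frac12(e_1+e_2)$, $e_{nn}=\frac12(e_1-e_2)$ are homogeneous of degree $1$ directly from $e_1$, $e_2$ and $\deg e_2=1$; these are all the diagonal units. For $n\ge3$, recall that $\Delta=\text{UJ}_{n-2}$ inherits the $G$-grading, that the conjugations used to homogenize $u_1=e_{22}+e_{n-1,n-1}$ and $u_2=e_{22}-e_{n-1,n-1}$ extend to inner automorphisms of $\text{UJ}_n$ via the block matrix $M'$, and that $u_1$, $u_2$ play for $\Delta$ exactly the roles that $e_1$, $e_2$ play for $\text{UJ}_n$. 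By Lemma~\ref{lemmaanterior} we have $\deg u_2=\deg e_2=1$, so the inductive hypothesis applies to $\Delta$: up to a graded isomorphism of $\Delta$ (again extended to $\text{UJ}_n$ via $M'$, which commutes with $e_1$, $e_2$), the grading on $\Delta$ is elementary, whence $e_{ii}$ is homogeneous of degree $1$ for $2\le i\le n-1$ by Lemma~\ref{basic_element}(i). Together with $e_{11}=\frac12(e_1+e_2)$ and $e_{nn}=\frac12(e_1-e_2)$, all $e_{ii}$, $i=1,\dots,n$, are homogeneous of degree $1$.

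\emph{Step 2: from homogeneous $e_{ii}$ to an elementary grading.} Consider the pairwise commuting multiplication operators $L_i\colon x\mapsto e_{ii}\circ x$ on $\text{UJ}_n$. Since $\deg e_{ii}=1$, each $L_i$ maps every homogeneous component $(\text{UJ}_n)_g$ into itself; hence every eigenspace of $L_i$, and more generally every simultaneous eigenspace of $L_1,\dots,L_n$, is a graded subspace (an intersection of graded subspaces is graded). Now $e_{kk}\circ e_{pq}=(\delta_{kp}+\delta_{kq})e_{pq}$ for $p\le q$, so $L_1,\dots,L_n$ are simultaneously diagonal in the basis $\{e_{pq}\mid p\le q\}$ of $\text{UJ}_n$, and the tuple of eigenvalues $(\delta_{1p}+\delta_{1q},\dots,\delta_{np}+\delta_{nq})$ determines the pair $(p,q)$ uniquely. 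Therefore each line $Ke_{pq}$, $p\le q$, is a simultaneous eigenspace of $L_1,\dots,L_n$, hence graded, so every matrix unit $e_{pq}$ is homogeneous. Thus, after the inner graded isomorphisms used above, the grading is elementary. Equivalently: since the $e_{ii}$ are homogeneous, the Peirce decomposition of $\text{UJ}_n$ with respect to $e_{11},\dots,e_{nn}$ refines the grading, and every Peirce component is one-dimensional.

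I do not expect a genuine obstacle: once the earlier lemmas are in hand, everything is forced. The only point needing some care is the bookkeeping in Step 1 — checking that at each stage of the descent $\text{UJ}_n\supset\text{UJ}_{n-2}\supset\cdots$ the conjugations homogenizing the corner idempotents of the current inner block extend to inner automorphisms of the ambient algebra (arranged once and for all by the block-diagonal matrix $M'$), and that Lemma~\ref{lemmaanterior} legitimately propagates the hypothesis $\deg e_2=1$ downward so that the inductive hypothesis may be applied.
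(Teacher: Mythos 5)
Your proposal is correct, and its first half coincides with the paper's argument: the same induction through the graded decomposition $\text{UJ}_n=T_1\oplus\Delta$, with Lemma~\ref{lemmaanterior} giving $\deg u_2=\deg e_2=1$ so that the inductive hypothesis applies to $\Delta$, and with the extension of the inner automorphisms of $\Delta$ via the block matrix $M'$ (which fixes $e_1$, $e_2$). Where you genuinely diverge is the final step. The paper extracts from the elementarity of the grading on $\Delta$ the homogeneity of $e_{22}$ and $e_{n-1,n-1}$, picks a homogeneous $z\in J$ with $(z)_{(1,2)}=1$, and manufactures $e_{12}=(z\circ e_{11})\circ e_{22}$ (and symmetrically $e_{n-1,n}$), so that all $e_{i,i+1}$ are homogeneous and Lemma~\ref{basic_element}(ii) finishes the proof. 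You instead keep only the homogeneity of the diagonal idempotents $e_{ii}$ (degree $1$ being automatic from $e_{ii}\circ e_{ii}=2e_{ii}$) and run a simultaneous-eigenspace (Peirce-type) argument for the commuting operators $L_i=e_{ii}\circ{-}$: since each $L_i$ preserves every homogeneous component and is diagonalizable over $K$ with eigenvalues $0,1,2$ (distinct because $\operatorname{char}K\ne2$), every joint eigenspace is graded, and these joint eigenspaces are exactly the lines $Ke_{pq}$, so all matrix units are homogeneous at once. This buys a uniform, construction-free conclusion that needs no choice of auxiliary homogeneous elements with prescribed entries, at the cost of no real extra work; the paper's route is equally short but more hands-on. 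One small bookkeeping point: for $n=3$ you cannot literally invoke Lemma~\ref{lemmaanterior} (there $u_2=0$), but since $\Delta=Ke_{22}$ is a graded one-dimensional subspace, $e_{22}$ is homogeneous of degree $1$ directly, which is exactly how the paper disposes of this base case; with that remark your induction is complete.
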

	\begin{proof}
		If $n=2$ then the elements $e_{11}$, $e_{22}$, $e_{12}$ are (up to a graded isomorphism) homogeneous hence the grading is elementary. If $n=3$ we consider the decomposition $\text{UJ}_3=T_1\oplus\Delta$. Since $\dim\Delta=1$ it is easy to prove that the grading is again elementary.

Thus we assume $n>3$. We decompose $\text{UJ}_n=T_1\oplus\Delta$. In the notation introduced above, by Lemma \ref{lemmaanterior} we have $\deg u_2=\deg e_2=1$. We use an induction to conclude that the grading on $\Delta$ is elementary. In particular the elements $e_{22}$ and $e_{n-1,n-1}$ are homogeneous. If $z\in J$ is homogeneous with $(z)_{(1,2)}=1$ then $e_{12}=(z\circ e_{11})\circ e_{22}$ is homogeneous. In the same way we obtain that $e_{n-1,n}$ is homogeneous. This implies that the elements $e_{12}$, $e_{23}$, \dots, $e_{n-1,n}$ are homogeneous. Therefore the grading is elementary.
	\end{proof}

	\begin{Lemma}
		If $\deg e_2\ne1$ then, up to a graded isomorphism, the grading is MT.
	\end{Lemma}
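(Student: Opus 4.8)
The plan is to argue by induction on $n$, with $n\le 3$ treated directly (here $L_a$ denotes the left multiplication $x\mapsto a\circ x$). For $n=2$ the ideal $J$ of strictly upper triangular matrices is one-dimensional, so a homogeneous element with nonzero $(1,2)$-entry is a scalar multiple of $e_{12}$; thus $e_{12}$, hence $Y_{1:1}^+=2e_{12}$ and $Y_{1:1}^-=0$, are homogeneous, and since $e_1,e_2$ are homogeneous of different degrees, the grading is MT by the characterization of MT gradings via homogeneity of the $Y_{i:m}^\pm$ with $m\in\{0,1\}$. For $n=3$ one has $\Delta=Ke_{22}$, so $e_{22}$ is homogeneous of degree $1$ and $W=\ker(L_{e_{22}}-\mathrm{id})=\mathrm{Span}\{e_{12},e_{23}\}$ is a homogeneous subspace; from a homogeneous $z$ with $(z)_{(1,2)}\ne0$ the element $e_{22}\circ z\in W$ is homogeneous with nonzero $(1,2)$-entry, and its $e_{23}$-coefficient is nonzero (otherwise $e_{12}$ is homogeneous, contradicting $e_2\circ e_{12}=e_{12}$ and $\deg e_2\ne1$). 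Conjugating by a suitable $\mathrm{diag}(d,1,1)$ rescales it to a multiple of $Y_{1:1}^+=e_{12}+e_{23}$, and then $Y_{1:1}^-=e_2\circ Y_{1:1}^+$ is homogeneous too; the same characterization finishes $n=3$.

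Now let $n\ge4$ and write $\text{UJ}_n=T_1\oplus\Delta$ with $\Delta\cong\text{UJ}_{n-2}$. By Lemma~\ref{lemmaanterior}, $\deg u_2=\deg e_2=t$ is an element of order $2$ with $t\ne1$, so by the induction hypothesis the grading on $\Delta$ is MT; in particular every $Y_{i:m}^\pm$ supported inside $\Delta$ (those with $i\ge2$ and $i+m\le n-1$) is homogeneous. Together with the homogeneous $Y_{1:0}^\pm=e_1,e_2$, the cited characterization reduces the proof to showing that $Y_{1:1}^+=e_{12}+e_{n-1,n}$ and $Y_{1:1}^-=e_{12}-e_{n-1,n}$ are homogeneous.

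The crux is a pair of observations. First, $V:=u_1\circ(J\cap T_1)=\mathrm{Span}\{e_{12},e_{1,n-1},e_{2n},e_{n-1,n}\}$ is a $4$-dimensional homogeneous subspace of $\text{UJ}_n$, being the image of the homogeneous subspace $J\cap T_1$ under the degree-$1$ operator $L_{u_1}$. Second, the operator $x\mapsto e_2\circ(u_2\circ x)$ is degree-\emph{preserving}: $e_2$ and $u_2$ are homogeneous of degree $t$ with $t^2=1$, so this composite sends the component of degree $g$ into the component of degree $t^2g=g$. A direct computation (valid for $n\ge4$, where also $u_2\ne0$) shows that this operator maps $V$ into $V$ and acts there as $\mathrm{diag}(1,-1,-1,1)$ in the basis $(e_{12},e_{1,n-1},e_{2n},e_{n-1,n})$; hence its $(+1)$-eigenspace $V^+:=\mathrm{Span}\{e_{12},e_{n-1,n}\}$ is a homogeneous subspace.

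To finish, pick a homogeneous $z\in J$ with $(z)_{(1,2)}\ne0$; replacing $z$ by its $T_1$-component, then by $u_1\circ z$, and finally projecting onto $V^+$ via $x\mapsto\tfrac12\bigl(x+e_2\circ(u_2\circ x)\bigr)$, we obtain a homogeneous element $v=\alpha e_{12}+\delta e_{n-1,n}\in V^+$ with $\alpha=(z)_{(1,2)}\ne0$; moreover $\delta\ne0$, since $\delta=0$ would make $e_{12}$ homogeneous, contradicting $e_2\circ e_{12}=e_{12}$ and $\deg e_2\ne1$. Conjugating by $D=\mathrm{diag}(\delta/\alpha,1,\ldots,1)$—which fixes $e_1$, $e_2$, and every matrix unit of $\Delta$—transforms $v$ into $\delta(e_{12}+e_{n-1,n})=\delta Y_{1:1}^+$, so $Y_{1:1}^+$, and then $Y_{1:1}^-=e_2\circ Y_{1:1}^+$, is homogeneous in the conjugated grading. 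Since $D$ leaves $\Delta$ pointwise fixed, all the $Y_{i:m}^\pm$ with $m\in\{0,1\}$ are now homogeneous, so that grading—hence the original one up to graded isomorphism—is MT. The step I expect to be the main obstacle is the second observation above: an unsophisticated argument only yields $Y_{1:1}^\pm$ homogeneous modulo $J^2$, and the clean way past the $J$-adic filtration is to notice that the composite of the two ``involution-like'' degree-$t$ multiplications $L_{e_2}$ and $L_{u_2}$ is genuinely degree-preserving, with an eigenspace inside $V$ that cuts out precisely the two outer matrix units $e_{12}$ and $e_{n-1,n}$; a secondary care is to keep the normalizing conjugation supported on rows and columns $1$ and $n$, so as not to disturb the MT-structure already established on $\Delta$.
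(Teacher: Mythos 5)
Your argument is correct and essentially the paper's own proof: the same induction through the decomposition $\text{UJ}_n=T_1\oplus\Delta$ with Lemma~\ref{lemmaanterior}, the same trick of combining multiplications by $e_2$ and $u_2$ to strip the $e_{1,n-1}$ and $e_{2n}$ components and obtain a homogeneous $\alpha e_{12}+\delta e_{n-1,n}$ (the paper takes $z=\tfrac12(z'\circ u_2+z'\circ e_2)$ where you use the degree-preserving projection $\tfrac12(\mathrm{id}+L_{e_2}L_{u_2})$ on $V$), the same argument that $\delta\ne0$ because $\deg e_2\ne1$, and the same normalization by a diagonal conjugation fixing $e_1$, $e_2$ and $\Delta$ (you scale the first row, the paper the last column), followed by the same characterization of MT gradings via homogeneity of the $Y_{i:m}^{\pm}$ with $m\in\{0,1\}$. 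The only minor point: in your $n=3$ base case choose the homogeneous $z$ inside $J$ (or first discard its diagonal part), since otherwise $e_{22}\circ z$ acquires an $e_{22}$-component and does not lie in $W$.
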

	\begin{proof}
First we assume $n>3$. Decompose $\text{UJ}_n=T_1\oplus\Delta$, by Lemma \ref{lemmaanterior}, we  assume by induction that up to a graded isomorphism, $\Delta$ is equipped with an MT grading.

		Let $z''\in J\cap T_1$ be a homogeneous element with $(z'')_{(1,2)}=1$, and let $z'=z''\circ u_1$. The only non-zero entries of $z'$ can be $(1,2)$, $(n-1,n)$, $(1,n-1)$, $(2,n)$, and $z'$ is homogeneous with $(z')_{(1,2)}=1$. If $z=\frac12(z'\circ u_2+z'\circ e_2)$ then $z$ is homogeneous and $z=e_{12}+ae_{n-1,n}$ for some $a$. Since $\deg e_2\ne1$ we have $\deg z\ne\deg(z\circ e_2)$, hence  $a\ne0$. Let $A=\text{diag}(1,1,\ldots,1,a)$. Then $\psi\colon \text{UJ}_n\to \text{UJ}_n$ defined by $x\mapsto A x A^{-1}$ is an isomorphism. The induced grading is such that $\psi(z)=e_{12}+e_{n-1,n}$, $\psi(z\circ e_2)=e_{12}-e_{n-1,n}$ and $\psi(Y_{i:1}^\pm)=Y_{i:1}^\pm$, for $i=2$, 3, \dots, $\lceil\frac{n-1}2\rceil$. As the latter are homogeneous elements the induced grading is MT.

	Now, as in the previous Lemma one proves that whenever $\deg e_2\ne 1$ and $n=2$ or $3$, the grading is MT. When $n=3$ and $\deg e_2\ne1$, we can find a homogeneous element of type $z=e_{12}+ae_{23}$, where $a\in F$ is non-zero. Thus we can conjugate with the diagonal matrix $\text{diag}(1,1,a)$ as in the general case in order to obtain a MT-grading.
	\end{proof}

	\begin{Thm}
		Every $G$-grading on $\text{UJ}_n$ has commutative support and, up to a graded isomorphism, the grading is either elementary or MT.
	\end{Thm}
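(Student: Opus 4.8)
The plan is to assemble the lemmas of this section into a dichotomy. By Lemma~\ref{lemmabefore}, after replacing the given grading by a graded-isomorphic one we may assume that $e_1=e_{11}+e_{nn}$ and $e_2=e_{11}-e_{nn}$ are homogeneous with $\deg e_1=(\deg e_2)^2=1$. Since $(\deg e_2)^2=1$, exactly one of two situations occurs: either $\deg e_2=1$, or $\deg e_2$ is an element of order $2$. The two preceding lemmas handle these cases respectively: in the first, up to a graded isomorphism the grading is elementary; in the second, up to a graded isomorphism it is MT. This already yields the second assertion of the theorem.

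For the statement about the support, recall that a graded isomorphism $\varphi\colon A\to A'$ of $G$-graded algebras satisfies $\varphi(A_g)=A'_g$ for every $g\in G$, hence it preserves the support as a subset of $G$; in particular it preserves the property that the support be commutative. Thus it suffices to observe that every elementary grading has commutative support (Lemma~\ref{basic_element}(iii)) and every MT grading has commutative support (the MT structure lemma, part (iii)) --- and neither of these facts presupposes $G$ abelian. Combining this with the previous paragraph, the original $G$-grading on $\text{UJ}_n$ has commutative support, which also retroactively justifies restricting to abelian $G$ in the classification theorems.

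The genuine content --- and the step I expect to be the main obstacle --- is not this final assembly but the induction buried inside the two case lemmas. There one must peel off the ``outer shell'' $T_1$ that contains $e_{11}$, $e_{nn}$ and the corner $e_{1n}$, realize the complement $\Delta=(1-e_1)\circ((1-\tfrac12 e_1)\circ\text{UJ}_n)$ as a copy of $\text{UJ}_{n-2}$ to which the induction hypothesis applies, and crucially check that an inner automorphism of $\Delta$ extends to an inner automorphism of $\text{UJ}_n$ via the block matrix $M'$ while fixing $e_1$ and $e_2$, so that the normalization from Lemma~\ref{lemmabefore} survives the inductive adjustment. One must also verify the base cases $n=2,3$ directly and use $\deg u_2=\deg e_2$ (Lemma~\ref{lemmaanterior}) to glue the outer and inner homogeneous data consistently; with those lemmas in hand, the theorem is immediate.
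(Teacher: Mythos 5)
Your proposal is correct and follows essentially the same route as the paper, which states this theorem as the direct consequence of Lemma~\ref{lemmabefore} together with the two preceding case lemmas (the dichotomy $\deg e_2=1$ versus $\deg e_2$ of order $2$), with the commutativity of the support coming from the structural lemmas on elementary and MT gradings, neither of which assumes $G$ abelian. Your additional remarks on the induction inside the case lemmas are accurate but not needed, since those lemmas are already established.
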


\section{On the graded identities}

	We have seen that non-isomorphic elementary gradings satisfy different graded identities. Let $G$ be a group and assume $A_1=(\text{UJ}_n,t_1,\eta)$ is an MT grading, and either $A_2=(\text{UJ}_n,t_2,\eta')$ with $t_1\ne t_2$ or $A_2=(\text{UJ}_n,\eta'')$ is an elementary grading. Then $f=(x_1^{(t_1)})^{\circ n}=x_1^{(t_1)}\circ x_1^{(t_1)}\circ\cdots\circ x_1^{(t_1)}$ is not a graded identity for $A_1$, but it is one for $A_2$.

	Now let $A_1=(\text{UJ}_n,t,\eta)$ and $A_2=(\text{UJ}_n,t,\eta')$, and assume $A_1\not\simeq A_2$. We shall use the notation of the proof of Lemma \ref{notation_bla}, and we will give an alternative proof for it.  If $\varphi\colon G\to G_0=G/\langle t\rangle$ is the canonical projection we denote the elementary gradings induced on $A_1$ and on $A_2$ by $\varphi$ as $\bar A_1=(\text{UJ}_n,\eta_0)$ and $\bar A_2=(\text{UJ}_n,\eta_0')$. Then we have two possibilities:

	\noindent(a) $\eta_0\ne\eta_0'$, hence we can find a polynomial $f(x_1^{(\bar g_1)},\ldots,x_m^{(\bar g_m)})$ such that $f$ is a graded identity for $\bar A_1$, but not for $\bar A_2$ (interchanging $\bar A_1$ and $\bar A_2$ if necessary). This means  that
\[
		g(x_1^{(g_1)}, x_1^{(g_1t)}, \ldots,x_m^{(g_m)}, x_m^{(g_mt)}) = f(x_1^{(g_1)}+x_1^{(g_1t)}, \ldots, x_m^{(g_m)}+x_m^{(g_mt)})
\]
	is a graded identity for $A_1$ but not for $A_2$.

	\noindent(b) $\eta_0=\eta_0'$. In this case, necessarily $n$ is even and, up to a graded isomorphism, $\eta=(g_1,\ldots,g_q)$ and $\eta'=(g_1,\ldots,g_{q-1},g_q')$ with $g_q\ne g_q'$. Note that $\text{Supp}\,J_1/J_1^2\ne\text{Supp}\,J_2/J_2^2$, where $J_i=(A_i,A_i,A_i)$, $i=1$ and $i=2$. Let $f=z_1^{(1)}\circ z_2^{(2)}\circ\cdots\circ z_q^{(q)}\circ z_{q-1}^{(q+1)}\circ\cdots\circ z_1^{(n-1)}$, where $z_i^{(j)}=(x_{3j-2}^{(1)},x_{3j-1}^{(1)},x_{3j}^{(g_i)})$. Then $f$ is a graded identity for $A_2$, but not for $A_1$.

	In this way we have the final result.
	\begin{Thm}
		Let $A_1$ and $A_2$ be two $G$-gradings on $\text{UJ}_n$. Then $A_1\simeq A_2$ as graded algebras if and only if $T_G(A_1)=T_G(A_2)$.
	\end{Thm}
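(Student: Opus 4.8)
The plan is to establish the nontrivial direction in contrapositive form. One implication is free: a $G$-graded isomorphism $A_1\to A_2$ carries the $T$-ideal of graded identities of the first onto that of the second, so $A_1\simeq A_2$ forces $T_G(A_1)=T_G(A_2)$. For the converse I would assume $A_1\not\simeq A_2$ and exhibit a (multilinear) graded polynomial that is an identity for exactly one of them. The first reduction is to invoke the structure theorem of Section~4: up to graded isomorphism each $A_i$ is either an elementary grading $(\text{UJ}_n,\eta)$ or a mirror type grading $(\text{UJ}_n,t,\eta)$ with $o(t)=2$, and in either case the support is abelian, so Theorem~\ref{class_element}, the MT classification theorem, and Lemmas~\ref{lemmadois}, \ref{lemmatres} and~\ref{lemmaquatro} are all available. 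This organizes the argument into three cases.

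First, if both $A_1$ and $A_2$ are elementary, then $A_1\not\simeq A_2$ means, by Theorem~\ref{class_element}, that the defining sequences satisfy $\eta\ne\eta'$ and $\eta\ne\text{rev}\,\eta'$; here I would simply quote the Corollary preceding Theorem~\ref{class_element}: Lemma~\ref{lemmaquatro} yields a sequence $\mu$ that is Jordan $\eta$-good but $\eta'$-bad, and then Lemmas~\ref{lemmatres} and~\ref{lemmadois} say that $f_\mu$ is a graded identity for $(\text{UJ}_n,\eta')$ but not for $(\text{UJ}_n,\eta)$.

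Second, suppose at least one of them, say $A_i$, is MT with parameter $t_i$, and that the other, $A_j$, is either elementary or MT with a \emph{different} order-two element $t_j$. I would take $f=(x_1^{(t_i)})^{\circ n}$. Substituting $x_1=Y_{1:0}^-=e_{11}-e_{nn}$, which is homogeneous of degree $t_i$ in $A_i$, a short induction in the spirit of Lemma~\ref{multi_table} shows $(Y_{1:0}^-)^{\circ n}$ is a nonzero scalar multiple of $Y_{1:0}^-$, so $f$ is not a graded identity for $A_i$. In $A_j$, however, every nonzero homogeneous element of degree $t_i$ lies in the ideal $J$ of strictly upper triangular matrices, since the homogeneous diagonal degrees are only $1$ in the elementary case and only $1$ and $t_j$ in the MT case, none of them $t_i$; as products of $n$ elements of $J$ vanish associatively, $a^{\circ n}=0$ for all $a\in J$, so $f$ is an identity for $A_j$. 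Hence $T_G(A_1)\ne T_G(A_2)$; in particular an elementary grading and an MT grading never share their graded identities, and two MT gradings with distinct $t$'s are separated.

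Third, and this is where the real work sits, suppose $A_1=(\text{UJ}_n,t,\eta)$ and $A_2=(\text{UJ}_n,t,\eta')$ share the order-two element $t$ but $A_1\not\simeq A_2$. By the MT classification theorem I may assume $\eta=(g_1,\dots,g_q)$, $\eta'=(g_1',\dots,g_q')$ with either some $g_i\not\equiv g_i'\pmod{\langle t\rangle}$ for $i\le\lfloor\frac{n-1}2\rfloor$, or $n$ even with $g_q\ne g_q'$. I would project along $\varphi\colon G\to G_0=G/\langle t\rangle$; as computed in the proof of Lemma~\ref{notation_bla}, the induced $G_0$-gradings are the elementary gradings $\bar A_1=(\text{UJ}_n,\eta_0)$ and $\bar A_2=(\text{UJ}_n,\eta_0')$. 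If $\eta_0\ne\eta_0'$, the elementary case supplies a $G_0$-graded polynomial $f(x_1^{(\bar g_1)},\dots,x_m^{(\bar g_m)})$ that is an identity for $\bar A_1$ but not $\bar A_2$; lift it to a $G$-graded polynomial $g$ by replacing each $x_k^{(\bar g_k)}$ with $x_k^{(g_k)}+x_k^{(g_kt)}$ (the fibre of $\varphi$ over $\bar g_k$ consists exactly of these two elements, as $t\ne1$), and expanding multilinearly shows $g$ is a graded identity for $A_1$ but not for $A_2$. If instead $\eta_0=\eta_0'$, the dichotomy forces $n$ even with $g_q\ne g_q'$ but $g_q\equiv g_q'\pmod{\langle t\rangle}$, so $\text{Supp}(J_1/J_1^2)\ne\text{Supp}(J_2/J_2^2)$ for $J_i=(A_i,A_i,A_i)$, and the explicit polynomial $f=z_1^{(1)}\circ\cdots\circ z_q^{(q)}\circ z_{q-1}^{(q+1)}\circ\cdots\circ z_1^{(n-1)}$ with $z_i^{(j)}=(x_{3j-2}^{(1)},x_{3j-1}^{(1)},x_{3j}^{(g_i)})$ vanishes on $A_2$, while an evaluation on the matrix units realizing $Y_{1:1}^+\circ\cdots\circ Y_{q-1:1}^+\circ\cdots$ shows it is not an identity for $A_1$. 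Verifying that this last $f$ really behaves as claimed — that is, that the associator blocks with two degree-$1$ slots genuinely isolate the $J/J^2$ degree data — is the main obstacle; everything else is bookkeeping on top of the classification and the $f_\mu$ mechanism already in place.
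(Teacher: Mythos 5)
Your proposal follows essentially the same route as the paper: reduce via the classification theorem to elementary or MT representatives, separate non-isomorphic elementary gradings through the $f_\mu$ corollary, separate an MT grading from an elementary one or from an MT grading with a different $t$ by the polynomial $(x_1^{(t)})^{\circ n}$, and treat two MT gradings with the same $t$ by projecting to $G/\langle t\rangle$, lifting a separating identity via $x_k^{(\bar g_k)}\mapsto x_k^{(g_k)}+x_k^{(g_k t)}$, and using the palindromic associator polynomial in the remaining even-$n$ case, exactly as in Section~5. The only quibble is minor: for $n$ even, $(Y_{1:0}^-)^{\circ n}$ is a nonzero multiple of $e_{11}+e_{nn}$ rather than of $Y_{1:0}^-$, but it is still nonzero, so your verification stands.
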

	
\section*{Acknowledgements}
We are thankful to the Referee whose numerous remarks and comments helped us improve the exposition, and correct some arguments in the main proofs of the paper. The Example before Lemma~\ref{afterexample} was also a Referee's suggestion which we accepted with gratitude.

\end{document}